	\definecolor{darkblue}{rgb}{0,0,0.5}
	\definecolor{darkgreen}{rgb}{0,0.5,0}	
\newcommand{\A}{{\mathcal A}}
\newcommand{\B}{{\mathcal B}}
\newcommand{\C}{{\mathcal C}}
\newcommand{\F}{{\mathcal F}}
\newcommand{\G}{{\mathcal G}}
\newcommand{\I}{{\mathcal I}}
	\let\acutedouble\H 
	\edef\ho{{\acutedouble{o}}}
\renewcommand{\H}{{\mathcal H}}
\newcommand{\X}{{\mathcal X}}
\newcommand{\N}{\mathbb{N^*}}
\newcommand{\No}{\mathbb{N}}
\newcommand{\R}{\mathbb{R}}
\newcommand{\Z}{\mathbb{Z}}
	\newcommand{\ReDeclareMathOperator}[2]{\renewcommand{#1}{\operatorname{#2}}}
\DeclareMathOperator{\Cov}{\mathbf{Cov}}
\DeclareMathOperator{\E}{\mathbf{E}}
\DeclareMathOperator{\id}{id}
\DeclareMathOperator{\ind}{\mathbf{1}}
\ReDeclareMathOperator{\L}{L}
\ReDeclareMathOperator{\P}{P}
\newcommand{\dconv}{\leadsto}
\newcommand{\lra}{\longrightarrow}
\newcommand{\ra}{\rightarrow}
\renewcommand{\epsilon}{\varepsilon}
\renewcommand{\phi}{\varphi}
\newcommand{\keywords}[1]{\noindent \textbf{Keywords:} #1} 
\newcommand{\MSC}[1]{\noindent \textbf{MSC:} #1} 
\newtheorem{theorem}{Theorem}
\newtheorem{lemma}{Lemma}
\newtheorem{proposition}{Proposition}
\newtheorem{corollary}{Corollary}
			\theoremstyle{definition}
\newtheorem*{definition}{Definition}
\newtheorem{assumption}{Assumption}
				\theoremstyle{remark}
\newtheorem{remark}{Remark}
\begin{document}
\title{\LARGE\mdseries\scshape A Sequential Empirical CLT for Multiple Mixing Processes with Application to $\B$-Geometrically Ergodic Markov Chains%
\thanks{Research supported by the German Science Foundation, Grant DE 370-4 ({\em New Techniques for Empirical Processes of Dependent Data}).}
}

\author{\large
	Herold Dehling\thanks{Fakult\"at f¨ur Mathematik, Ruhr-Universit\"at Bochum, Universit\"atsstra\ss e 150, 44780 Bochum, Germany},
	Olivier Durieu\thanks{Laboratoire de Math\'ematiques et Physique Th\'eorique, UMR-CNRS 7350, F\'ed\'eration Denis Poisson FR-CNRS 2964, Universit\'e Fran\c{c}ois Rabelais de Tours, Parc
de Grandmont, 37200 Tours, France}, 
	Marco Tusche\footnotemark[2]\;\footnotemark[3]}

\date{September 15, 2014 }	

\maketitle

\begin{abstract}
\noindent \textbf{Abstract:} 
We investigate the convergence in distribution of sequential empirical processes of dependent data indexed by a class of functions $\F$. 
Our technique is suitable for processes that satisfy a multiple mixing condition on a space of functions which differs from the class $\F$.
This situation occurs in the case of data arising from dynamical systems or Markov chains, for which the Perron--Frobenius or Markov operator, respectively, has a spectral gap on a restricted space.
We provide applications to iterative Lipschitz models that contract on average.
\end{abstract}
	
	\smallskip
\MSC{60F05, 60F17, 60G10, 62G30, 60J05}\\ 
	\smallskip	
\keywords{Multivariate Sequential Empirical Processes, Limit Theorems, Multiple Mixing, Spectral Gap, Dynamical Systems, Markov chain,  Change-Point Problems} 
			\bigskip
\hrule
			\bigskip

\section{Introduction}\label{sec:intro}
The asymptotic behaviour of empirical processes has been studied for more than 60 years. The first rigorous result was the empirical process central limit theorem for i.i.d.\ data, established by \citet{Don52}. This theorem, conjectured by \citet{Doo49}, made it possible to derive the asymptotic distribution of a large number of test statistics and estimators that can be represented as functionals of the empirical process, by an application of the continuous mapping theorem. Among the examples are the Kolmogorov-Smirnov goodness of fit test, the Cram\'er-Von Mises $\omega^2$ criterion, and more generally von Mises statistics.

\citet{CieKes62} were among the first to extend Donsker's empirical process CLT to weakly dependent data, studying the empirical distribution of remainders in the dyadic expansion of a random number $\omega\in [0,1]$. \citet{Bil68} proved the first general result for dependent data, namely an empirical process CLT for data that can be represented as functionals of  a mixing process. For an overview of the literature on empirical processes of dependent data, see \citet{DehPhi02}, \citet{DedDouLan07}. 

\citet{Mul70}, and independently \citet{Kie72}, initiated the study of the sequential empirical process, defined as 
\begin{equation*}
 U_n(x,t)=\frac{1}{\sqrt{n}} \sum_{i=1}^{[nt]} \left( \ind_{\{X_i\leq x  \}} -F(x)  \right),
\end{equation*}
where $F(x)=P(X_1\leq x)$. The process $U_n(x,t)$ is also known as the two-parameter empirical process. Kiefer and M\"uller showed that for i.i.d.\ data, the sequential empirical process converges in distribution to a mean zero Gaussian process $K(x,t)$ with covariance structure
\begin{equation*}
\E\left( K(x,t) K(y,u) \right) = \min (t,u)(F(\min(x,y))-F(x) F(y)).
\end{equation*}
The limit process $K(x,t)$ is called Kiefer process, or Kiefer-M\"uller process.

\citet{KomMajTus75}, refining a technique originally invented by \citet{CsoRev75}, established the almost sharpest possible bounds for the error in the approximation of the sequential empirical process by the Kiefer process in the i.i.d.\ case so far.
For an overview of this topic, see the book by \citet{CsoRev81} or the survey article by \citet{GanStu79}.

Many authors have studied extensions of the sequential empirical process CLT to dependent data, e.g.\ \citet{BerPhi77} and \citet{PhiPin80} for strongly mixing processes and \citet{BerHorSch09} for S-mixing processes. 
Recently, \citet{DedMerRio13} proved strong approximation results for the sequential empirical process of some stationary sequences,
see also \citet{DedMerRio14} in the case of functions of absolutely regular sequences.
\citet{DehTaq89} determined the asymptotic distribution of the sequential empirical process in the case of long-range dependent data.

Recently, \citet{DehDurVol09} have developed a technique to prove empirical process CLTs for Markov chains and dynamical systems that do not necessarily satisfy any of the standard mixing conditions.
The technique has been extended by \citet{DehDur11}, \citet{DurTus12} and \citet{DehDurTus14a} to multivariate empirical processes and to empirical processes indexed by classes of functions. 
Among the examples that could be treated by the new techniques are $\B$-geometrically ergodic Markov chains,
dynamical systems with a spectral gap on the transfer operator and ergodic automorphisms of the $d$-dimensional torus, 
for which the empirical process CLT could be established.
It is the goal of the present paper to extend these techniques to the sequential empirical process, with a special focus on $\B$-geometrically ergodic Markov chain.
To this aim, we shall develop a sequential empirical CLT under multiple mixing (see definition in \autoref{def:mm}) that can be applied to this situation.

To illustrate our results, we present applications to a number of concrete examples. E.g., we establish a new sequential empirical process CLT for a class of Lipschitz models that contract on average; see Section \ref{Iterative}. We also present an application to ergodic torus automorphisms, and to expanding maps of the unit interval. These last two examples have recently also been investigated by  \citet{DedMerPen13} and by \citet{DedMerRio13}, who obtained results similar to ours. 

\medskip

Sequential empirical process CLTs  can be applied to the study of the asymptotic distribution of change-point tests based on the empirical distribution function.
Suppose $(X_i)_{i\in\No}$ is a stochastic process with marginal distribution functions
$\mu_1,\mu_2,\ldots$. Given the observations $X_1,\ldots,X_n$, we want to test the hypothesis
$\mathbf{H}_0$: \emph{``the process is stationary with marginal distribution $\mu$''} 
against the alternative
$\mathbf{H}_A$: \emph{``there exists a $k^* \in \{1,\ldots,n-1\}$ such that $(X_1,\ldots, X_{k^*})$ and $(X_{k^* +1},\ldots,X_n)$ are both stationary with different marginal distributions''}.
We propose the test statistic
\[
  T_n:=\max_{0\leq k\leq n} \sup_x \frac{k}{n} \biggl( 1-\frac{k}{n} \biggr) \sqrt{n}
  \bigl| F_{k}(x)-F_{k+1,n}(x)  \bigr|,
\]
where $F_k$ denotes the empirical distribution function of the observations $X_1,\ldots, X_k$ and $F_{k+1,n}$ denotes the empirical distribution function of $X_{k+1},\ldots,X_n$ (set $F_0=F_{n+1,n}=0$).
In order to determine the asymptotic distribution of $T_n$, we study the $\ell^\infty(\R\times[0,1])$-valued process $R_n= (R_n(x,t))_{(x,t)\in\R\times[0,1]}$ given by
\[
 R_n(x,t)=\sqrt{n} t(1-t) \bigl(F_{[nt]}(x) - F_{[nt]+1,n}(x)\bigr).
\]
As proved in \autoref{sec:stat}  (\autoref{the:motivation}), assuming ``convergence of the sequential empirical process'', we obtain under the null hypothesis $\mathbf{H}_0$ that
$$
R_n \dconv \bigl(K(x,t)-t K(x,1)\bigr)_{(x,t)\in\R\times[0,1]},
$$
where $K$ is the centred Gaussian process with covariance structure
\begin{align*}
&\Cov\bigl(K(x,t) , K(y,u)\bigr) \notag\\
\shoveright
	&= \min\{t,u\}\, \biggl\{ 
	\sum_{k=0}^{\infty} \Cov\bigl( \ind_{\{X_0 \leq x \}} , \ind_{\{X_k \leq y \}} \bigr) 
	+ \sum_{k=1}^{\infty} \Cov\bigl( \ind_{\{X_0 \leq y \}} , \ind_{\{X_k \leq x \}} \bigr)
	\biggr\}.
\end{align*}
This process is also referred to as a Kiefer process.
Applying the continuous mapping theorem to the supremum-functional, we obtain the asymptotic distribution of the test statistic $T_n$ under the null hypothesis, that is
\[
  T_n \dconv \sup_{x\in \R,\ t\in[0,1]} |K(x,t)-t K(x,1)|.
\]
Note that, in fact this result remains true for general $\F$-indexed empirical processes, (see \autoref{pro:test-statistic}).
\medskip

The remainder of this paper is organized as follows: 
In \autoref{sec:SECLT}, we recall some definitions and give the statement of a sequential empirical CLT for multiple mixing processes (\autoref{the:sq-ep-clt_mm}).
We also discuss an application of our general technique to the situation of the ergodic automorphisms of the torus.
In \autoref{sec:SECLT-spectralgap}, as application, we present sequential empirical CLTs for $\B$-geometrically ergodic Markov chains (\autoref{the:spectral-gap}) and dynamical systems with a transfer operator having a spectral gap (\autoref{the:dynamical-system}). A concrete application of \autoref{the:spectral-gap} to Lipschitz iterative models that contract on average (\autoref{thm:ite}) is also given in this section.
The asymptotic distribution of the test statistic $T_n$ (\autoref{pro:test-statistic}) is given in \autoref{sec:stat}.
The proofs of the main results are postponed to \autoref{sec:proof_SECLT} and \autoref{sec:proof_covariance}.


\section{A Sequential Empirical CLT for Multiple Mixing Processes}\label{sec:SECLT}

\subsection{Definitions and Notations}\label{sec:def}
Let $(\X,\A)$ be a measurable space.
For a positive measure $\lambda$ on $\X$ and a $\lambda$-integrable complex-valued function $f$ on $\X$, we will use the notation $\lambda f:= \int_\X f ~d\lambda$.
For $s\in[1,\infty)$, we denote by $\L^s(\lambda)$ the Lebesgue space of $s$-th power integrable complex-valued functions on $\X$.
This space is equipped with the norm $\|f\|_s=(\lambda (|f|^{s}))^{1/s}$. 
Further, we denote the space of essentially bounded measurable functions on $\X$ w.r.t.\ $\lambda$ by $L^\infty(\lambda)$ and the corresponding (essential) supremum norm by $\|\cdot\|_\infty$.
Note that these norms depend heavily on the choice of the measure $\lambda$; however throughout this paper it will always be clear which measure we refer to.

Let $(X_i)_{i\in\No}$ be an $\X$-valued stationary stochastic process with marginal distribution $\mu$ and
let $\F$ be a class of real-valued measurable functions on $\X$ which is uniformly bounded w.r.t.\ the $\|\cdot\|_\infty$-norm.
For $n\in\N$, we define the map $F_n:\F \lra \R$, induced by the empirical measure, by
\[
F_n(f) := \frac{1}{n} \sum_{i=1}^{n} f(X_i),\quad f\in\F.
\]
The \emph{sequential empirical process} of the $n$-th order of $(X_i)_{i\in\No}$ is then the $\F\times[0,1]$-indexed process $U_n:=(U_n(f,t))_{(f,t)\in\F\times[0,1]}$
given by
\begin{align*} 
	U_n(f,t) := \frac{[nt]}{\sqrt{n}} \bigl(F_{[nt]}(f) - \mu f\bigr)
	= \frac{1}{\sqrt{n}}\sum_{i=1}^{[nt]} \bigl( f(X_i) - \mu f \bigr),  \quad(f,t)\in\F\times[0,1],
\end{align*}
where $[\cdot]$ denotes the lower Gauss bracket, i.e.\ $[x]:=\sup\{z\in\Z : z\leq x\}$.

For fixed $n\in\N$, we consider $U_n$ as a random element in the metric space $\ell^\infty(\F\times[0,1])$ of bounded real-valued functions on $\F\times[0,1]$, equipped with the supremum norm and the corresponding Borel $\sigma$-field. 
Since $\F\times[0,1]$ is uncountable, here we cannot assume that $U_n$ is measurable and thus standard techniques of weak convergence do not apply.
We will therefore use the theory of outer probability and expectation (see \citet{VanWel96}). 

Let $\E^* X$ denote the outer expectation of a possibly non-measurable random element $X$,
let $U$ be measurable, and let $U,U_0,U_1,\ldots$ take values in $\ell^\infty(\F\times[0,1])$. 
We define convergence in distribution or weak convergence $U_n\dconv U$ in $\ell^\infty(\F\times[0,1])$ as the convergence
$ \E^*(\phi(U_n)) \ra \E(\phi(U))$
of all bounded and continuous functions $\phi: \ell^\infty(\F\times[0,1]) \lra \R$. 
We say that the process $(X_i)_{i\in\No}$ satisfies a sequential empirical CLT if the process $U_n$ converges in distribution in $\ell^\infty(\F\times[0,1])$ to a tight centred Gaussian process.

Empirical CLTs usually require some bound of the size of the indexing class $\F$. 
This size is usually measured by counting certain sets, e.g.\ balls or brackets of a given $\|\cdot\|_s$-size, needed to cover $\F$ (c.f.\ \citet{Oss87} and \citet[p.83 ff.]{VanWel96}).
In our upcoming setting, we will only deal with properties for functions of a restricted class which could be disjoint of the class $\F$. We thus need an adapted notion of bracketing numbers. This notion was introduced in \citet*{DehDurTus14a}.
\begin{definition}\label{def:bracket}
Let $(\X,\A,\mu)$ be a probability space.
For two functions $l, u:\X\ra\R$ such that $l(x)\leq u(x)$ for all $x\in\X$, 
we define the bracket
\[
  [l,u]:=\{f:\X\rightarrow \R: l(x) \leq f(x) \leq u(x), \mbox{ for all } x\in \X  \}.
\]
Let $\G$ be a subset of a normed real vector space $(\C,\|\cdot\|_\C)$ of measurable real-valued functions on $\X$.
For given $\epsilon>0$, $A>0$, and $s\in[1,\infty]$, we call
$[l,u]$ an $(\epsilon,A,\G,\L^s(\mu))$-bracket, if $l,u\in \G$ and
\begin{align*}
 &\|u-l\|_s \leq \epsilon \\
 &\|u\|_\C \leq A,\quad \|l\|_\C \leq A.
\end{align*}
For a class of real-valued functions $\F$ on $\X$, we define the bracketing number
\[
	N(\epsilon,A,\F,\G,\L^s(\mu))
\]
as the smallest number of $(\epsilon, A, \G, \L^s(\mu))$-brackets needed
to cover $\F$.
\end{definition}
This notion of brackets allows to control the number of brackets needed to cover $\F$ not only with respect to the decreasing size of the brackets in $\L^s$-norm, but also with a control of the increasing $\|\cdot\|_\C$-size of the bracketing functions as the $\L^s$-norm goes to zero.


\subsection{Multiple mixing processes and the main result}
In this section, we present a general result which will be applied to $\B$-geometrically ergodic Markov chains in \autoref{sec:SECLT-spectralgap}.
We consider stationary sequences $(X_i)_{i\in\No}$ which satisfy a multiple mixing condition with respect to some space of functions. Let  $(\C,\|\cdot\|_\C)$ be some normed vector space of functions on $\X$. The multiple mixing condition is defined as follows.
\begin{definition}[Multiple Mixing Processes] \phantomsection\label{def:mm}
We say that a process $(X_i)_{i\in\No}$ is \emph{multiple mixing} with respect to $\C$ if
there exist a real $\theta\in (0,1)$, a real $s\ge 1$, and an integer $d_0\in\No$ such that for all $p\in\N$, there exist an integer $\ell$ and a multivariate polynomial $P$ of total degree not larger than $d_0$ such that
\begin{equation}\label{mm}
  \left|  \Cov(f(X_{i_0})\cdots f(X_{i_{q-1}})  ,f(X_{i_q})\cdots f(X_{i_p}) )   \right| \leq  \|f\|_s \|f\|_{\C}^\ell P(i_1-i_0,\ldots,i_p-i_{p-1})\theta^{i_q-i_{q-1}}
\end{equation}
holds for all $f\in \C$ with $\mu f=0$ and $\|f\|_\infty \leq 1$,
all integers $i_0\le i_1 \le \ldots \le i_p$ and all $q\in \{1,\ldots,p\}$. 
\end{definition}
As proved in \citet{DehDur11}, multiple mixing processes satisfy a moment bound which is particularly useful to establish empirical CLTs.

\medskip

The approach developed here is useful when the indexing class $\F$ is different from the space $\C$. 
In the following we shall require the two following assumptions concerning the processes $(f(X_i))_{i\in\No}$, where $f:\X\lra\R$ belongs to $(\C,\|\cdot\|_\C)$. 
%
\begin{assumption}[Finite dimensional sequential CLT for $\C$-observables]\label{asp:clt_seq} 
For every choice of $f_1,\ldots,f_k\in\C$ and $t_1,\ldots,t_k\in[0,1]$
\[
 \frac{1}{\sqrt{n}}\left( \sum_{i=1}^{[nt_1]}(f_1(X_i)-\mu f_1) \;,\; \ldots \;,\; \sum_{i=1}^{[nt_k]}(f_k(X_i)-\mu f_k) \right) \dconv N(0,\Sigma),
\]
where $N(0,\Sigma)$ denotes some $k$-dimensional normal distribution with mean zero and covariance matrix $\Sigma=(\Sigma_{i,j})_{1\leq i,j\leq k}$.
\end{assumption}

\begin{assumption}[Multiple mixing w.r.t.\ $\C$]\label{asp:mm}
The process $(X_i)_{i\in\No}$ is multiple mixing with respect to $\C$, and with parameters $\theta\in(0,1)$, $s\ge 1$, and $d_0\in\No$.
\end{assumption}


To derive a CLT for an $\F$-indexed empirical process, we now have to precise the relation between the class $\F$ and the space $\C$.
Note that, in the particular case where $\F$ is a subset of $\C$, from \autoref{asp:clt_seq} we can infer the finite dimensional convergence of the process $(U_n)_{n\in\No}$. Then, the tightness can be established under an entropy condition on $\F$ that uses the usual bracketing number defined as in \citet{Oss87}.
Nevertheless, in many examples, the functions of $\F$ do not belong to the space $\C$. To overcome this difficulty, we have to measure how the functions of $\F$ are well approximated by the functions of $\C$.
We will use the bracketing numbers introduced in the preceding section to 
obtain a control on the size of $\F$ which depends on the possibility of approximation by the space $\C$.

\medskip

We can show the following sequential empirical CLT.

\begin{theorem}\label{the:sq-ep-clt_mm}
Let $(\X,\A)$ be a measurable space, let $(X_i)_{i\in\No}$ be an $\X$-valued stationary process with marginal distribution $\mu$, and let $\F$ be a uniformly bounded class of measurable functions on $\X$.
Suppose that, for some normed vector space $\C$ of measurable functions on $\X$, \autoref{asp:clt_seq} and \autoref{asp:mm} hold.

If there exist a subset $\G$ of $\C$ which is bounded in $\|\cdot\|_\infty$-norm, 
$C>0$, $r>-1$, and $\gamma>d_0+1$ such that
			\begin{equation}
				\label{con:entropy_exp}
				\int_0^1 \varepsilon^{r}\sup_{\epsilon \le \delta \le 1} N^2\bigl(\delta,
				\exp\bigl(C \delta^{-\frac{1}{\gamma}}\bigr),\F,\G,\L^s(\mu)\bigr)
				d\varepsilon <\infty
			\end{equation}
then the sequential empirical process $U_n$ converges in distribution in 
$\ell^\infty(\F\times[0,1])$ to a tight Gaussian process $K$.
\end{theorem}

Observe that for $r'\ge 0$, inequality \eqref{con:entropy_exp} holds for all $r>2r'-1$, if 
\[ N\bigl(\epsilon,\exp\bigl(C \delta^{-\frac{1}{\gamma}}\bigr),\F,\G,\L^s(\mu)\bigr) = O(\varepsilon^{-r'})\quad \text {as}\ \varepsilon\rightarrow 0.\]
Note further, that the supremum in \eqref{con:entropy_exp} appears in order to deal with the possible non-monotonicity of the bracketing number.

Let us also mention that several classes of functions $\F$ which satisfy the condition \eqref{con:entropy_exp} with respect to a space of bounded H\"older functions are presented in \citet{DehDurTus14a}. Among these classes are indicators of rectangles, indicators of balls, indicators of ellipsoids, and a class of monotone functions in dimension 1.

\medskip

In this general setting of \autoref{the:sq-ep-clt_mm}, we are unable to specify the covariance structure of the limit process. 
The next corollary shows that under additional conditions, the limit process of $U_n$ is indeed a Kiefer process.

\begin{corollary}\label{lem:covariance_seq} 
In the situation of \autoref{the:sq-ep-clt_mm}, assume further that 
\begin{enumerate}[label=(\roman*), ref=\roman*]
	\item\label{con:clt_cov}
	\autoref{asp:clt_seq} holds with covariance matrix $\Sigma$ given by 
 	\begin{align} 
 		\Sigma_{i,j}
 		= \min\{t_i,t_j\}\, \biggl\{ 
 		\sum_{k=0}^{\infty} \Cov\bigl( f_i(X_0), f_j(X_k) \bigr) 
 		+ \sum_{k=1}^{\infty} \Cov\bigl( f_j(X_0) , f_i(X_k) \bigr) \biggr\}, \label{eq:clt_cov}
 	\end{align}
	\item\label{con:bimix}
there exists a constant $D>0$ such that for all $f\in\G\cup(\G-\G)$ and all $\phi\in\F\cup(\F-\G)$  
 	\begin{equation}\label{eq:cov_phi-f}
\bigl| \Cov\bigl( \phi(X_0), f(X_k)\bigr) \bigr| \leq D \|\phi\|_\infty \| f \|_\C \theta^k, 
  	\end{equation}
\end{enumerate}
Then the covariance structure of the limit process $K$ is given by 
 	\begin{align}
	 		&\Cov\bigl(K(f_1,t_1),K(f_2,t_2)\bigr) \notag\\
 		\shoveright&= \min\{t_1,t_2\}\, \biggl\{ 
 	 \sum_{k=0}^{\infty} \Cov\bigl( f_1(X_0), f_2(X_k) \bigr) 
 		+ \sum_{k=1}^{\infty} \Cov\bigl( f_1(X_k) , f_2(X_0) \bigr) \biggr\}, \label{eq:cov-structure}
	 \end{align}
for all $ f_1,f_2\in\F$, $t_1,t_2\in[0,1]$.
\end{corollary}
The proof of \autoref{the:sq-ep-clt_mm} and \autoref{lem:covariance_seq}  are given, respectively, in \autoref{sec:proof_SECLT} and \autoref{sec:proof_covariance}.

\begin{remark}\label{rem:kiefer}
A centred Gaussian process $K$ with covariance structure \eqref{eq:cov-structure} is often referred to as a Kiefer process.
\end{remark}


In \autoref{sec:SECLT-spectralgap}, we will apply \autoref{the:sq-ep-clt_mm} to prove a sequential empirical CLT for $\B$-geometrically ergodic Markov chains, which is the main motivation of the paper. Before, we would like to mention that other applications of \autoref{the:sq-ep-clt_mm} are possible.

\paragraph{Ergodic Automorphism of the Torus}

Let $T$ be an ergodic automorphism of the $d$ dimensional torus $\mathbb{T}^d$ as introduced in Section 4 of \citet{DehDurTus14a}.
Following the ideas of \citet{DehDurTus14a}, we can extend their theorem to a sequential empirical CLT.
Let $\G$ be a bounded subset of $\H_{\alpha}(\mathbb{T}^d,\R)$, $\alpha\in(0,1]$, let $\mu=\lambda$ be the Lebesgue measure on $\mathbb{T}^d$ and 
assume further that $\F$ is a uniformly bounded class of functions from $\mathbb{T}^d$ to $\R$. We denote by $d_0$ the size of the biggest Jordan block of $T$ restricted to its neutral subspace.
We can establish the following result which is not proved here.
\begin{corollary}
Assume that the class $\F$ satisfies the condition \eqref{con:entropy_exp} for some $\gamma>d_0+1$. Assume further that there exist $C>0$ and $a>0$ such that for all $f\in\F$ and $k\in\N$, there exists $g_k\in\G$ satisfying $\|f-g_k\|_1\le k^{-1}$ and $\|g_k\|\le C k^a$.
Then the sequential empirical process given by 
\[ U_n(f,t)=\frac{1}{\sqrt{n}}\sum_{i=1}^{[nt]}(f\circ T^i - \lambda f),\quad f\in\F, t\in[0,1]\]
converges in distribution in $\ell^\infty(\F\times[0,1])$ to a Kiefer process.
\end{corollary}
Note that both assumptions on $\F$ are satisfied e.g.\ if $\F$ is the class of indicators of rectangles, balls, or ellipsoids (to see this, follow the proof of Proposition 3.2, 3.5 and 3.6 in \citet{DehDurTus14a}). 

This result is proved in details in \citet{Tus14} by application of \autoref{the:sq-ep-clt_mm}. We just notice here that, in this situation, the multiple mixing property holds (see \citet{DehDur11}) and that \autoref{asp:clt_seq} can be derived from the classical CLT (see \citet{Tus14}, Lemma 11.1).
Further, assumption \eqref{con:bimix} of \autoref{lem:covariance_seq} is not straightforward. Instead, using the second assumption of the proposition,
we can show that there exist some $c>0$ and $\theta \in (0,1)$ such that for all $f\in\F$ and $g\in\H_\alpha(\mathbb{T}^d,\R)$, $|\Cov(f,g\circ T^n)|\le c \|g\|_\alpha \theta^n$, which is sufficient to conclude as in \autoref{lem:covariance_seq}.

\begin{remark}
As mentioned in the introduction, for ergodic torus automorphisms \citet{DedMerPen13} have investigated the sequential empirical process indexed by a class of the form 
$\{1_{(-\infty,t]}\circ f : t\in\R^l\}$, where $f:\mathbb{T}^d\to \R^l$ is fixed. Under some regularity assumptions on $f$, and using techniques different from ours, \citet{DedMerPen13} obtain weak convergence to a Kiefer process. They also develop a tightness criterion (Proposition 3.13) that can be applied to many other examples, e.g. those given in \citet{DedPri07}.
\end{remark}

\paragraph{Multiple Mixing of Lower Rate}

Processes of a lower mixing rate have been studied by \citet{DurTus12}. They consider a multiple mixing condition w.r.t.\ the space of bounded $\alpha$-H\"older functions on $\R^d$, $\alpha\in(0,1]$, where the term $\theta^{i_q-i_{q-1}}$ in \eqref{mm} is replaced by a general term $\Theta({i_q-i_{q-1}})$ with a monotone decreasing function $\Theta:\No\ra\R_+$.
Under the condition that $\sum_{i=0}^\infty i^{2p-2}\Theta(i) <\infty$, they were able to establish an empirical CLT. 
This could also be extend to a sequential version. Since it is not needed for our application, we decide to not develop this very general setting here.
We can just remark that, in this situation, a stronger entropy condition will be needed. In particular, the second parameter in the bracketing number which appears in \eqref{con:entropy_exp} should be replaced by a polynomial function of $\varepsilon^{-1}$.


\section{A Sequential Empirical CLT under Spectral Gap}\label{sec:SECLT-spectralgap}

We now present an application of \autoref{the:sq-ep-clt_mm} to establish a sequential empirical CLT for Markov chain having a spectral gap property.

\subsection{\texorpdfstring{$\B$-geometrically ergodic Markov chains}{B-geometrically ergodic Markov chains}}
\label{sec:sqclt-mc}
\newcounter{count:con-sg}

In the following, let $(X_i)_{i\in\No}$ be a time homogeneous Markov chain on a measurable state space $(\X,\A)$ with a probability transition $P$ and an invariant measure $\nu$.
We assume that the Markov chain starts with initial distribution 
$\nu$, i.e\ that the distribution of $X_0$ is $\nu$. This makes $(X_i)_{i\in\No}$ a stationary sequence.
We also denote by $P$ the associated Markov operator defined by
\[
	Pf = \int_\X f(y) ~P(\cdot,dy).
\]

Now, let $(\B,\|\cdot\|_\B)$ a Banach space of measurable functions from $\X$ to $\R$. We will assume that $P$ is a bounded linear operator on $\B$, and we denote by $\mathcal{L}(\B)$ the space of all bounded linear operators from $\B$ to $\B$.

We will need the following properties on the Banach space $\B$:
\begin{enumerate}[label=(\Alph*), ref=\Alph*, series=sg-conditions]
	\item\label{con:in_B} 
	   $\ind_\X \in\B$ and $P\in\mathcal{L}(\B)$.
\end{enumerate}	
For some $m\in [1,\infty]$,
\begin{enumerate}[resume*=sg-conditions]	
	\item\label{con:B->Lm}
	$\B$ is continuously included in $L^m(\nu)$, i.e.\ there is a $K>0$ such that $\|\cdot\|_m \leq K \|\cdot\|_\B$.
\end{enumerate}
Further we consider processes such that the action of the corresponding Markov operator on $\B$ satisfies
\begin{enumerate}[resume*=sg-conditions]
	\item\label{con:sg}
		$\|P^nf- (\nu f) \ind_\X\|_\B \leq \kappa \|f\|_\B \theta^n$ 
	for some $\kappa>0$, $\theta\in[0,1)$, and all $f\in\B$.
\end{enumerate}
This property is often referred to as strong or geometric ergodicity with respect to $\B$ (c.f.\ \citet{MeyTwe93}, \citet{Her08}, and \citet{HerPen10}). 
\begin{remark}
Note that condition \eqref{con:sg} corresponds to a spectral gap property of $P$ acting on $\B$, i.e.\ 
$1$ is the only eigenvalue of modulus one, it is simple, and the rest of the spectrum is contained in a disk of radius strictly smaller than one. Further, in this case there exists a decomposition of the linear operator $P$ in $\mathcal{L}(\B)$, 
\begin{align*}
P=\Pi +N, 
\end{align*}
such that $\Pi f= (\nu f)\ind_\mathcal{X}$ is a projection on the eigenspace of $1$, $N\circ \Pi=\Pi\circ N=0$, and $\rho(N):=\lim_{n\to\infty}\|N^n\|^{1/n}_{\mathcal{L}(\B)}<1$,
where $\|\cdot\|_{\mathcal{L}(\B)}$ denotes the operator norm on $\B$.
\end{remark}
We first show below that the conditions \eqref{con:in_B} -- \eqref{con:sg} guarantee a sequential finite dimensional CLT for functions in $\B$.

Actually, we will establish a $k$-dimensional Donsker invariance principle, 
which of course implies the desired result by a projection.

\begin{proposition}\label{pro:mc_fidi-clt}
Suppose that for some $m\in[1,\infty]$, \eqref{con:in_B}, \eqref{con:B->Lm}, \eqref{con:sg} hold. Let $k$ be a positive integer and $f_1,\dots,f_k \in\B\cap L^s(\nu)$, with $s=m/(m-1)$.
Then 
\begin{align}\label{eq:psp-conv}
\bigl(U_n(f_1,t),\ldots,U_n(f_k,t)\bigr)_{t\in[0,1]} \dconv W 
\end{align}
in $(\ell^\infty[0,1])^k$, where $W:=\bigl(W_1(t),\ldots,W_k(t)\bigr)_{t\in[0,1]}$ is a centred Gaussian process with covariances  
\begin{align*} 
 		\Cov\bigl(W_i(t),W_j(u)\bigr)
 		= \min\{t,u\}\, \biggl\{ 
 	 \sum_{k=0}^{\infty} \Cov\bigl( f_i(X_0), f_j(X_k) \bigr) 
 		+ \sum_{k=1}^{\infty} \Cov\bigl( f_j(X_0) , f_i(X_k) \bigr) \biggr\}. 
 	\end{align*}	
\end{proposition}
In particular this proposition shows that \autoref{asp:clt_seq} holds with covariance structure \eqref{eq:clt_cov}.

\begin{proof}
To prove this proposition, we will use a result of \citet{DedMer03}. 
An application of their Corollary~2 (see also Theorem~2 in \citet{DedMer03}) yields that a sufficient condition for the convergence \eqref{eq:psp-conv} is that the centred random vector $Z_i:=\bigl(f_1(X_i)-\nu f_1,\ldots,f_k(X_i)-\nu f_k\bigr)$ satisfies 
\begin{equation}\label{ded}
\sum_{i=0}^\infty \E\bigl(\bigl|Z_0 ||\E(Z_i|X_0)\bigr|\bigr)<\infty.
\end{equation}
Here $|\cdot|$ denotes the Euclidean norm on $\R^k$. By H\"olders inequality one has
\[
\E\bigl(\bigl|Z_0 ||\E(Z_i|X_0)\bigr|\bigr) \le \E(|Z_i|^s)^\frac{1}{s} \E\bigl(\bigl|\E(Z_i|X_0)\bigr|^m\bigr)^\frac{1}{m}.
\]
The assumption that the $f_j$ belong to $L^s(\nu)$ gives that $\E(|Z_i^s|)^\frac1s<\infty$. Applying \eqref{con:B->Lm} and \eqref{con:sg}, we finally obtain 
\[
\E\bigl(\bigl|\E(Z_i|X_0)\bigr|^m\bigr)^\frac1m
\le \sum_{j=1}^k\bigl\|\E(f_j(X_i)-\nu f_j |X_0)\bigr\|_m
\le K\sum_{j=1}^k\|P^if_j-\nu f_j\|_\B \le  K\kappa\sum_{j=1}^k\|f_j\|_\B \theta^i,
\]
which shows that \eqref{ded} holds and thus proves the proposition.
\end{proof}

Note that without the assumption that the $f_i$ belong to $L^s(\nu)$, it is still possible to prove a finite-dimensional sequential CLT (\autoref{asp:clt_seq}) using the Nagaev method consisting of operator perturbations. However, without $f_i\in L^s(\nu)$ we do not obtain a characterization of the covariance matrix (see \citet{Tus14} for details).

Now, to apply \autoref{the:sq-ep-clt_mm} to prove a sequential empirical CLT, we need to show the {multiple mixing} property of $(X_i)_{i\in\No}$. To this aim, the following further condition on the space $\B$ is useful. 
\begin{enumerate}[resume*=sg-conditions]
\item \label{con:norm} There exist $C>0$ and $\ell\in\N$ such that, if $f\in\B$ and $g\in\B$ are bounded by $1$, then $fg\in\B$ and 
                  $\|fg\|_\B\le C\max\{\|f\|_\B,\|g\|_\B\}^\ell$.  
\end{enumerate}
Note that if $\B$ is a Banach algebra, condition \eqref{con:norm} holds with $\ell=2$.

The following lemma is now a straightforward extension of Lemma 3 in \citet{DehDur11}.
\begin{lemma}\label{lem:mm}
Under the conditions \eqref{con:in_B}, \eqref{con:B->Lm}, \eqref{con:sg}, and \eqref{con:norm}, 
$(X_i)_{i\in\No}$ satisfies the {multiple mixing} property  w.r.t.\ $\B$ with  $d_0=0$ and $s=m/(m-1)$.
\end{lemma}

Eventually, observe that the second assumption of \autoref{lem:covariance_seq} is also satisfied as it is shown by the following lemma.

\begin{lemma}\label{l:cov}
Under the conditions \eqref{con:in_B}, \eqref{con:B->Lm}, and \eqref{con:sg}, for all $f\in\B$ and all $g\in \L^s(\nu)$, with $s=\frac{m}{m-1}$, we have
\[
  |\Cov(g(X_0), f(X_k))|\le C \|g\|_s\|f\|_\B \theta^k.
\]
\end{lemma}
\begin{proof}
Applying successively H\"older's inequality, \eqref{con:B->Lm}, and  \eqref{con:sg}, we get
\begin{align*}
|\Cov(g(X_0), f(X_k))|
&\le \E |g(X_0) \E(f(X_n)-\nu f | X_0) |\\
& \le \|g\|_s\|P^kf-(\nu f)\ind_\X\|_\B\\
& \le C \|g\|_s\|f\|_\B \theta^k.
\end{align*}
\end{proof}

As a conclusion, we thus have the following sequential empirical central limit theorem as a corollary of \autoref{the:sq-ep-clt_mm}, \autoref{lem:covariance_seq}, \autoref{pro:mc_fidi-clt}, and \autoref{lem:mm}.

\begin{theorem}[Sequential empirical CLT for $\B$-geometrically ergodic Markov chains]\label{the:spectral-gap}
Let $\F$ be a $\|\cdot\|_\infty$-bounded class of functions from $\X$ to $\R$.
Assume that for some $m\in[1,\infty]$, the conditions \eqref{con:in_B}, \eqref{con:B->Lm}, \eqref{con:sg}, and \eqref{con:norm} hold.
If there is a $\|\cdot\|_\infty$-bounded subset $\G\subset\B$ such that \eqref{con:entropy_exp} is satisfied with $s=m/(m-1)$,
then the sequential empirical process converges in distribution in $\ell^\infty(\F\times [0,1])$ to a centred Gaussian process $K$ with covariance structure given by \eqref{eq:cov-structure}.
\end{theorem}


Now, let us give an example by applying \autoref{the:spectral-gap} to random iterative Lipschitz models.

\subsection{Iterative Lipschitz models that contract on average}\label{Iterative}
	
In this section, we assume that $(\X,d)$ is a (not necessarily compact) metric space in which every closed ball is compact. 
Further we assume that $\X$ is equipped with the Borel $\sigma$-algebra $\mathfrak{B}(\X)$. Let $\{T_i,\ i\ge 0\}$ be a family of Lipschitz maps from $\X$ to $\X$. We consider the Markov chain with state space $\mathcal{X}$ and transition probability $P$ given by
\[
 P(x,A)=\sum_{i\ge 0}p_i(x)\ind_{A}(T_i(x)),\quad x\in\mathcal{X},\ A\in\mathfrak{B}(\X),
\] 
where the $p_i$ are Lipschitz functions from $\X$ to $[0,1]$ which satisfy $\sum_{i\ge 0}p_i(x)=1$ for all $x\in\X$. 
Thus, each step of the Markov chain corresponds to the application of one of the maps $T_i$ which is chosen randomly
 with respect to a probability distribution which depends on the actual state of the chain. 
We assume that this model has a property of contraction on average, that is that there exists a $\rho\in(0,1)$ such that
\begin{equation}\label{ca}
 \sum_{i\ge 0}d(T_i(x),T_i(y))p_i(x) < \rho d(x,y),\quad \forall x, y\in\X.
\end{equation}

Statistical properties of such models have been studied by \citet{DubFre66}, \citet{BarElt88}, \citet{HenHer01}, \citet{WuSha04}, \citet{Her08}, and by \citet{HerPen10} in the case of constant functions $p_i$ and by \citet{DoeFor37}, \citet{Kar53}, \citet{BarDemEltGer88}, \citet{Pei93}, \citet{Pol01}, and by \citet{Wal07} in the case of variable functions $p_i$. 

As in many of the cited papers, we need the following technical properties. For some fixed $x_0\in\X$, suppose
\begin{align}
\label{h0}
&\sup_{\substack{x,y,z\in\mathcal{X},\\ y\ne z}}\sum_{i\ge 0}\frac{d(T_i(y),T_i(z))}{d(y,z)}p_i(x)<\infty,
\\
&\label{h1}
\sup_{x,y\in\mathcal{X}}\sum_{i\ge0}\frac{d(T_i(y),x_0)}{1+d(y,x_0)}p_i(x) <\infty,
\\
&\label{h2}
\sup_{x\in\mathcal{X}}\sum_{i\ge 0}\frac{d(T_i(x),x_0)}{1+d(x,x_0)}\sup_{y,z\in\mathcal{X}, y\ne z}\frac{|p_i(y)-p_i(z)|}{d(y,z)}<\infty. 
\end{align}
Moreover assume that for all $x, y\in\mathcal{X}$, there exist sequences of integers $(i_n)_{n\ge1}$ and $(j_n)_{n\ge 1}$ such that
\begin{equation}\label{h4}
 d\bigl(T_{i_n}\!\circ\ldots\circ T_{i_1}(x)\,,\,T_{j_n}\!\circ\ldots\circ T_{j_1}(y)\bigr)\bigl(1+d\bigl(T_{j_n}\!\circ\ldots\circ T_{j_1}(x)\,,\,x_0\bigr)\bigr)\to 0\quad \text{as}\ n\to\infty
\end{equation}
with $p_{i_n}(T_{i_{n-1}}\!\circ\ldots\circ T_{i_1}(x))\cdot\ldots\cdot p_{i_1}(x)>0$ and  $p_{j_n}(T_{j_{n-1}}\!\circ\ldots\circ T_{j_1}(y))\cdot\ldots\cdot p_{j_1}(x)>0$.
Note that conditions \eqref{h0} -- \eqref{h2} are verified when the family of maps $T_i$ is finite and 
\eqref{h4} is verified when \eqref{ca} -- \eqref{h2} hold and each $p_i$ is positive.
See \citet{Pei93} for a discussion on these assumptions.

Under the conditions \eqref{ca} -- \eqref{h4}, \citet{Pei93} proved that the Markov chain has an attractive $P$-invariant probability measure $\nu$ with existing first moment. We define the stationary process $(X_i)_{i\in\No}$ on $\X$ as the Markov chain with transition probability $P$ starting with distribution $\nu$, that is $X_0\sim \nu$.

A central limit theorem for the empirical process associated to the Markov chain $(X_i)_{i\ge 0}$ was proved by \citet{Dur13} (see also \citet{WuSha04} in the case of constant functions $p_i$). The following theorem extends this result to the sequential empirical processes.

For $\alpha\in(0,1]$, we consider the space $\H_\alpha(\X)$ of bounded $\alpha$-H\"older continuous functions on $\X$ with values in $\R$, equipped with the norm 
\[
	\|\cdot \|_{\H_\alpha} := \|\cdot\|_\infty + m_\alpha(\cdot),
\]
where 
\[
m_\alpha(f):=\sup_{\substack{x,y\in\X \\ x\neq y}} \frac{|f(x)-f(y)|}{d(x,y)^\alpha}.
\]

\begin{corollary}\label{thm:ite}
Let \eqref{ca} -- \eqref{h4} hold, $(X_i)_{i\in\No}$ be the Markov chain with transition probability $P$ starting under the invariant distribution $\nu$, and
consider a $\|\cdot\|_\infty$-bounded class of functions $\F$.
Let $s\in(1,2)$ and $\G$ be a $\|\cdot\|_\infty$-bounded subset of the space $\H_\alpha(\X)$ for some $\alpha<\frac{s-1}{s}$ such that \eqref{con:entropy_exp} holds.  
Then the $\F$-indexed sequential empirical process $(U_n(f,t))_{\F\times[0,1]}$ associated to the process $(X_i)_{i\ge 0}$ converges in distribution in the space $\ell^\infty(\F\times [0,1])$ to a centred Gaussian process with covariance given by \eqref{eq:cov-structure}. 
\end{corollary}

\begin{proof}
First, we introduce spaces of Lipschitz functions with weights that give the geometric ergodicity of the chain.
For every $\alpha,\beta\in[0,1]$, let $\H_{\alpha,\beta}(\X)$ denote the space of continuous function from $\X$ to $\R$ with
$\|f\|_{\H_{\alpha,\beta}}=N_\beta(f)+m_{\alpha,\beta}(f)<\infty$, where
\[
 N_\beta(f)=\sup_{x\in\mathcal{X}}\frac{|f(x)|}{1+d(x,x_0)^\beta} 
\quad\text{and}\quad 
m_{\alpha,\beta}(f)=\sup_{x,y\in\mathcal{X}, x\ne y}\frac{|f(x)-f(y)|}{d(x,y)^\alpha(1+d(x,x_0)^\beta)} .
\]
In particular, the space $\H_\alpha(\X):=\H_{\alpha,0}(\X)$ is the space of bounded $\alpha$-H\"older functions from $\X$ to $\R$
and we have $\|\cdot\|_{\H_{\alpha,0}}=2^{-1}\|\cdot\|_{\H_\alpha}$.
It is a subspace of $\H_{\alpha,\beta}(\X)$ for all $\beta>0$. 
The following properties are straightforward and given without proof.
\begin{lemma}\label{l:H}
For all $\alpha$ and $\beta\in[0,1]$,
 \begin{enumerate}[label=(\roman*), ref=\roman*]
  \item \label{i}the space $(\H_{\alpha,\beta}(\X),\|\cdot\|_{\H_{\alpha,\beta}})$ is a Banach space which satisfies condition \eqref{con:in_B},
  \item \label{ii} for every bounded functions $f,g\in\H_{\alpha,\beta}(\X)$, we have that\\ $\|fg\|_{\H_{\alpha,\beta}}\le \|f\|_\infty\|g\|_{\H_{\alpha,\beta}}+\|g\|_\infty\|f\|_{\H_{\alpha,\beta}}$,
 \item \label{iii} for every $f\in\H_\alpha(\X)$ and $g\in\H_{\alpha,\beta}(\X)$, we have that $\|fg\|_{\H_{\alpha,\beta}}\le \|f\|_{\H_\alpha}\|g\|_{\H_{\alpha,\beta}}$,
 \item \label{iv}  there exists a $C>0$, for every $f\in\H_{\alpha,\beta}(\X)$, $f\in L^\frac1\beta(\nu)$ and 
         $\|f\|_\frac1\beta \le C N_{\beta}(f)$.
 \end{enumerate}
\end{lemma}
Therefore condition  \eqref{con:B->Lm} holds with $m=1/\beta$ as a consequence of \eqref{iv}, and condition \eqref{con:norm} is satisfied due to \eqref{ii}.
Now, according to Theorem~1 in \citet{Pei93}, we obtain for all $\alpha, \beta \in (0,1/2)$ with $\alpha<\beta$ that $P$ is a bounded linear operator on $\H_{\alpha,\beta}(\X)$ which satisfies condition \eqref{con:sg}.

We now apply \autoref{the:spectral-gap}. Let $s$, $\alpha$, and $\G$ be as in the statement of \autoref{thm:ite}.
By choosing $\beta=(s-1)/s<\frac12$, we have $\alpha<\beta$ and thus \eqref{con:in_B} -- \eqref{con:norm} hold
for the space $\B=\H_{\alpha,\beta}(\X)$ with $m=1/\beta$.
Further, for any $g\in\G$, we have $g\in\H_{\alpha,\beta}(\X)$ and 
$\|g\|_{\H_{\alpha,\beta}}\le \|g\|_{\H_\alpha}$. Therefore, condition \eqref{con:entropy_exp} is also satisfied with respect to the $\H_{\alpha,\beta}(\X)$-norm.
\end{proof}


\subsection{Dynamical Systems with a Spectral Gap}
Let us mention that the result of \autoref{sec:sqclt-mc} can be adapted to deal with dynamical systems using the Perron--Frobenius operator in place of the Markov operator.
Let $(\X,\A)$ be a measurable space and let $T$ be a measurable transformation on $\X$ which preserves a probability measure $\mu$ on $(\X,\A)$. Let $P$ be the associated
Perron--Frobenius operator defined on $L^1(\mu)$ by the equation
\[
\mu(f\cdot Pg)=\mu(f\circ T\cdot g),\quad \forall f\in L^\infty(\mu), g\in L^1(\mu).
\]
We have the following result which can be derived from \autoref{the:spectral-gap} using relativized kernel as in \citet{HenHer01}, Chapter XI.
\begin{theorem}[Sequential empirical CLT for dynamical systems with a spectral gap]\label{the:dynamical-system}
Let $\F$ be a $\|\cdot\|_\infty$-bounded class of functions from $\X$ to  $\R$. Assume that there exist a Banach space $\B$ and $m\in[1,\infty]$ such that the conditions \eqref{con:in_B}, \eqref{con:B->Lm}, \eqref{con:sg}, and \eqref{con:norm} hold with respect to the Perron--Frobenius operator and replacing $\nu$ by $\mu$.
If there exists a $\|\cdot\|_\infty$-bounded subset $\G\subset \B$ such that \eqref{con:entropy_exp} holds for $s=\frac{m}{m-1}$, then the process $(U_n(f,t))_{\F\times [0,1]}$, defined by $U_n(f,t)=\frac{1}{\sqrt{n}}\sum_{i=1}^{[nt]}\left( f\circ T^i - \mu f \right)$, converges in distribution in $\ell^\infty(\F\times [0,1])$ to a centred Gaussian process $K$ with covariance structure given by
\[
\Cov(K(f,t),K(g,u))=\min\{t,u\}\left(
\sum_{k=0}^\infty \Cov(f,g\circ T^k) + \sum_{k=1}^\infty \Cov(f\circ T^k,g)
\right).
\]
\end{theorem}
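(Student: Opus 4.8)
The plan is to follow the proof of \autoref{the:spectral-gap} line by line, replacing the Markov operator by the Perron--Frobenius operator $P$ and the marginal $\nu$ by $\mu$, and to feed the outcome into the abstract sequential empirical CLT \autoref{the:sq-ep-clt_mm}. Since $T$ is measure preserving, the sequence $(f\circ T^i)_{i\in\No}$ is stationary under $\mu$ for every fixed $f$, so $U_n$ is the sequential empirical process of a stationary sequence and the abstract theorem applies once its two hypotheses are verified: a finite-dimensional CLT for functions in $\mathcal{C}=\Vect_\R(\G)$, and the multiple-mixing moment estimate that drives the tightness chaining. The only genuinely new work is the finite-dimensional statement; the tightness part of the argument is purely operator theoretic and transfers verbatim.

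First I would re-establish the finite-dimensional analogue of \autoref{pro:mc_fidi-clt}, i.e.\ asymptotic normality of $(U_n(f_1,t_1),\dots,U_n(f_d,t_d))$ for $f_1,\dots,f_d\in\mathcal{C}$, by the Nagaev--Guivarc'h perturbation method. The device that makes this work is the duality defining $P$, namely $\mu(g\cdot Ph)=\mu((g\circ T)\cdot h)$ for $g\in L^\infty(\mu)$, $h\in L^1(\mu)$. Iterating it lets one rewrite a multi-time correlation $\mu\bigl(\prod_{j}(g_j\circ T^{k_j})\bigr)$ with $k_1\le\dots\le k_d$ as an integral against $\mu$ of a composition of transfer-operator blocks, so that, using \eqref{con:in_B} which places $\ind_\X$ in $\B$, the joint characteristic function $\E\exp\bigl(\tfrac{i}{\sqrt n}\sum_{j}t_j\sum_{i\le[n s_j]}f_j\circ T^i\bigr)$ becomes a product of the perturbed operators $P_{f,t/\sqrt n}$ applied to $\ind_\X$ and integrated against $\mu$. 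Condition \eqref{con:perturb_op} gives the two-term Taylor expansion of $t\mapsto P_{f,t}$ at $0$, and the spectral gap \eqref{con:sg} isolates a simple leading eigenvalue $\lambda(t)=1-\tfrac{\sigma^2}{2}t^2+o(t^2)$ whose expansion, together with the exponential decay of the rest of the spectrum, yields the Gaussian limit; polarization over the pairs $(f_j,f_\ell)$ produces the full covariance, in which the two sums over $k$ in the statement arise exactly from the two time orderings $k\ge 0$ and $k\ge 1$ in the duality bookkeeping.

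For tightness I would simply invoke the chaining argument of \autoref{the:spectral-gap} unchanged. That argument controls the increments of $U_n$ over brackets through moment bounds of the form supplied by the multiple-mixing property, and these bounds are derived solely from \eqref{con:sg}, the continuous embedding \eqref{con:B->Lm}, and the multiplicativity \eqref{con:norm}, together with the entropy integral \eqref{con:entropy_exp} with $s=\tfrac{m}{m-1}$; none of these inputs refers to the probabilistic interpretation of $P$, so the estimates hold identically for the Perron--Frobenius operator. Combining the finite-dimensional CLT with tightness via \autoref{the:sq-ep-clt_mm} then gives convergence in $\ell^\infty(\F\times[0,1])$ to the centred Gaussian process with the stated covariance.

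I expect the main obstacle to be the bookkeeping in the duality step: the Perron--Frobenius operator propagates in the direction opposite to the Koopman dynamics $f\mapsto f\circ T$, so care is needed with the order of composition of the blocks $P_{f_j,\cdot}$ and with the time ordering of the indices $k_j$ when converting correlations into operator products. Getting this ordering right is precisely what ensures that the verification of the multiple-mixing hypothesis of \autoref{the:sq-ep-clt_mm} goes through and that the two asymmetric sums in the covariance come out with the correct ranges; once this correspondence is set up correctly, the remainder is a direct transcription of the proof of \autoref{the:spectral-gap}.
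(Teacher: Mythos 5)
Your proposal is correct and follows exactly the route the paper intends: the paper itself gives no separate proof of \autoref{the:dynamical-system}, stating only that it follows the proof of \autoref{the:spectral-gap} with the Perron--Frobenius operator in place of the Markov operator and $\mu$ in place of $\nu$, which is precisely your plan (re-derive the finite-dimensional CLT via the Nagaev perturbation method using the duality $\mu(g\cdot Ph)=\mu((g\circ T)\cdot h)$, then reuse the operator-theoretic multiple-mixing and chaining arguments through \autoref{the:sq-ep-clt_mm}). Your warning about the reversed order of the operator blocks is well placed and correct: the duality yields $\E\bigl(e^{itS_n}\bigr)=\mu\bigl(P_{g,t}^{n-[ns]}P_{f,t}^{[ns]}\ind_\X\bigr)$ rather than the Markov-chain ordering $\nu\bigl(P_{f,t}^{[ns]}P_{g,t}^{n-[ns]}\ind_\X\bigr)$, but since the leading eigenvalue factors are scalars, the spectral analysis and the resulting covariance are unaffected.
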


As a possible application, we can extend the empirical CLT proved by \citet{ColMarSch04} for a class of expanding maps of the interval, to a sequential empirical CLT. In the situation considered in \citet{ColMarSch04}, the spectral gap property can be established on the space of functions of bounded variation.

We consider a piecewise $C^2$ expanding map $T$ of the interval $[0,1]$ which is topologically mixing. We assume that there is a finite partition of $[0,1]$ by intervals such that $T$ is monotone on each interval and further $\inf_{x\in[0,1]}|(T^{n})'(x)|\ge CK^n$ for some $C>0$ and $K>1$.
As noted in \citet{ColMarSch04} (see also \citet{LasYor73}), there is a unique ergodic invariant probability measure $\mu$ such that $d\mu=h(x)d\lambda$, where $\lambda$ is the Lebesgue measure on $[0,1]$. The function $h$ belongs to the Banach algebra $BV$ of functions of bounded variation. By application of \autoref{the:dynamical-system}, we obtain the following result.
\begin{corollary}
Assume that $\frac{1}{h}\ind_{h>0} \in BV$, and let $\F$ be a $\|\cdot\|_\infty$-bounded class of functions such that there exists a subset $\G$ of $BV$ for which \eqref{con:entropy_exp} holds for some $s\ge1$.
Then the process $(U_n(f,t))_{\I\times [0,1]}$, defined by $U_n(f,t)=\frac{1}{\sqrt{n}}\sum_{i=1}^{[nt]}\left( f\circ T^i - \mu f \right)$, converges in distribution in $\ell^\infty(\I\times [0,1])$ to a centred Kiefer process.
\end{corollary}

Note that, in the usual case where $\F=\{\ind_{[0,t]}\mid t\in[0,1]\}$, this result is not new. It can be derived from the result of Section 3 in \citet{DedMerRio13}, since the coefficient $\beta_{2,X}(n)$  which is considered in that paper decreases exponentially fast in our setting (see \citet{DedPri07}, Section 6.3). In \citet{DedMerRio13}, the result is stronger since a strong approximation by a Kiefer process is proved. This implies our weak convergence result.

\begin{proof}
Recall (see \citet{HenHer01}) that the Perron-Frobenius operator $P$ associated with $T$ and $\lambda$ has a spectral gap on $BV$: $1$ is a simple eigenvalue with eigenfunction $h$, and the rest of the spectrum is in a disk of radius strictly smaller than $1$.
Further, the space $BV$ satisfies assumptions \eqref{con:in_B} and \eqref{con:B->Lm} (with $m=+\infty$) and the operator $P$ satisfies 
$\|P^nf- (\lambda f)h \|_{BV} \leq \kappa \|f\|_{BV} \theta^n$ for some $\kappa>0$, $\theta\in[0,1)$, and all $f\in BV$. $BV$ being a Banach algebra, condition \eqref{con:norm} is also satisfied.

In general, the Lebesgue measure is not the invariant measure, i.e. $h$ is not $1$. Thus, we define the set 
$I_h=\{x\in[0,1]\mid h(x)>0\}$ and for functions defined on $I_h$,
 we introduce the operator $P_h$
defined by $P_hf(x)=\frac{1}{h(x)}P(fh)(x)$.
Note that, since $\mu(I_h)=1$, every function $f$ defined on $[0,1]$ is $\mu$ almost surely equal to the function defined by $f$ on $I_h$ and $0$ on $[0,1]\backslash I_h$. With this remark, we can easily check that $\mu(f\cdot P_hg)=\mu(f\circ T\cdot g)$, for all $f\in L^\infty(\mu)$ and $g\in L^1(\mu)$. Then $P_h$ is the Perron-Frobenius operator associated with $T$ and $\mu$.
Now, if a function $f$ is defined on $I_h$, the function $fh$ can be considered on $[0,1]$ by giving the value $0$ on  $[0,1]\backslash I_h$. We introduce the space $\B_h=\{f:I_h\to\R\mid fh\in BV\}$ equipped with the norm $\|f\|_h=\|fh\|_{BV}$. Let us check that the assumptions of \autoref{the:dynamical-system} are satisfied for $P_h$ and $\B_h$.

Clearly, $\B_h$ satisfies the condition \eqref{con:in_B}.
The fact that $\frac{1}{h}\ind_{h>0} \in BV$ gives \eqref{con:B->Lm} (with $m=+\infty$) and \eqref{con:norm}. From the spectral decomposition of $P$ we derive the spectral decomposition of $P_h$ and we obtain the condition \eqref{con:sg} on the space $\B_h$ (with $\mu$ instead of $\nu$).
Thus \autoref{the:dynamical-system} can be applied in this situation.
\end{proof}

As a simple example, we can consider any class of functions $\F=\{f_t\mid t\in[0,1]\}$ indexed by a parameter $t\in[0,1]$ that satisfies:
\begin{itemize}
 \item for all $t\in[0,1]$,  $f_t$ is a non-increasing function bounded by $1$,
 \item for all $0\le t\le u\le1$, $f_t\le f_u$,
 \item the function $t\in[0,1]\mapsto \mu f_t$ is $\alpha$-H\"older for some $\alpha\in(0,1]$.
\end{itemize}
Indeed, in this situation the choice $\G=\F$ is possible. For all $t\in[0,1]$, $f_t$ is $BV$ with $\|f_t\|_{BV}\le 2$. Now, fix $\varepsilon>0$ and choose $m=\lfloor \varepsilon^{-\frac{1}{\alpha}}\rfloor$. 
Let $t_i=\frac{i}{m}$, $i=0,\ldots,m$. For all $t\in[0,1]$, there exists $i\in\{1,\ldots,m\}$ such that $t_i\le t\le t_{i+1}$ and thus $f_{t_i}\le f_t \le f_{t_{i+1}}$. Further, 
$$
\|f_{t_i}-f_{t_{i+1}}\|_1=\mu f_{t_i} - \mu f_{t_{i+1}} \le C (\frac1m)^\alpha \le C\varepsilon.
$$
This shows that $N(\varepsilon,2,\F,\F,\L^1(\mu))= O(\varepsilon^{-\frac{1}{\alpha}})$ as $\varepsilon\to 0$ and gives \eqref{con:entropy_exp}.


\medskip

\citet{Gou09} gave examples of expanding maps of the interval for which the Perron-Frobenius operator does not act on the space of bounded variation functions, but acts on the space of Lipschitz functions with a spectral gap property. These examples also satisfy the assumptions of our theorem and thus sequential empirical CLTs can be proved. Note that the space of Lipschitz functions is a Banach algebra and thus condition \eqref{con:norm} is trivially satisfied. Further, the usual class of the indicator functions of intervals can be well approximated by Lipschitz functions, and the condition \eqref{con:entropy_exp} is verified for this class.


\section{Statistical applications}\label{sec:stat}	

As mentioned in the introduction, sequential empirical CLTs can be applied to derive asymptotic distributions in change-point tests based on the empirical distribution function.  
We shall consider below the natural generalization of the process $T_n$, introduced in \autoref{sec:intro}, to processes taking values in a measurable space $\X$.
Let $(X_i)_{i\in\No}$ be a $\X$-valued stationary process, and $\F$ be a class of function on $\X$. As before, we denote the empirical measure by $F_n(f):=n^{-1} \sum_{i=1}^n f(X_i)$, $n\in\N$,
and we set $F_0(f)=0$.
For $j\in\{1,\ldots,n\}$, we define $F_{j,n}(f):= (n-j+1)^{-1} \sum_{i=j}^n f(X_i)$ and set $F_{n+1,n}(f):=0$. 
Consider the $\ell^\infty(\F\times[0,1])$-valued process $R_n=(R_n(f,t))_{(f,t)\in\F\times[0,1]}$ given by
\[
 R_n(f,t):=\sqrt{n} \frac{[nt]}{n} \frac{n-[nt]}{n}  \bigl(F_{[nt]}(f) - F_{[nt]+1,n}(f)\bigr).
\]
The following theorem gives the asymptotic distribution of $R_n$.

\begin{proposition}\label{the:motivation}
Assume that $(X_i)_{i\in\No}$ satisfies the sequential empirical CLT with indexing class $\F$ and limit process $K$, that is, $U_n \dconv K$ in $\ell^\infty(\F\times[0,1])$ as $n\to\infty$, where $K$ denotes a tight centred Gaussian process.
Then
\[ 
R_n \dconv (K(f,t)-t K(f,1))_{(f,t)\in\F\times[0,1]}
\]
in $\ell^\infty(\F\times[0,1])$ to as $n\to\infty$.
\end{proposition}

\begin{proof} 
Let $\mu$ denote the distribution function of the $X_i$. For $t\in[1/n,1)$ we have 
\begin{align}
&F_{[nt]}(f)-F_{[nt]+1,n}(f) \notag\\
\shoveright&=  \frac{1}{[nt]} \sum_{i=1}^{[nt]} f(X_i)
 -\frac{1}{n-[nt]} \sum_{i=[nt]+1}^n f(X_i) \notag\\
\shoveright&=  \frac{1}{[nt]} \sum_{i=1}^{[nt]} \left( f(X_i) - \mu f\right)
 -\frac{1}{n-[nt]} \sum_{i=[nt]+1}^n \left( f(X_i) - \mu f\right) \notag\\
\shoveright&= \left(\frac{1}{[nt]}+\frac{1}{n-[nt]} \right) \sum_{i=1}^{[nt]} \left( f(X_i) - \mu f\right)
 -\frac{1}{n-[nt]} \sum_{i=1}^n \left( f(X_i) - \mu f\right) \notag\\
\shoveright&= \frac{1}{\sqrt{n}}\frac{n}{[nt]} \frac{n}{n-[nt]}\, U_n(f,t)  
 - \frac{1}{\sqrt{n}} \frac{1}{t} \frac{n}{n-[nt]}\, t U_n(f,1).\label{eq:tm050313-0}
\end{align}
Further, by definition we have $R_n(f,1)=0$ and $R_n(f,t)=0$ for $t\in[0,1/n)$. 
Since also $U_n(f,t)=0$ for $t\in[0,1/n)$, we obtain with \eqref{eq:tm050313-0} that
\begin{align}
 R_n(f,t) 
&= U_n(f,t) - \frac{[nt]}{n}\, U_n(f,1), \notag\\
&= U_n(f,t)- tU_n(f,1) + \frac{nt-[nt]}{n} U_n(f,1) \quad\text{for all}\ t\in[0,1]. \label{eq:tm050313-1}
\end{align}
Let $A_n$ denote the $\F\times[0,1]$-indexed processes given by $A_n(f,t):=\bigl((nt-[nt])/n\bigr) U_n(f,t)$. 
Since $\sup_{t\in[0,1]}|(nt-[nt])/n| \ra 0$ as $n\to\infty$, by Slutsky's Theorem and the sequential empirical CLT, $A_n$ converges in distribution (and thus in probability) to zero. 
Another application of Slutsky's theorem and the sequential empirical CLT on \eqref{eq:tm050313-1} yields
\begin{align*}
R_n = \bigl(U_n(f,t)-tU_n(f,1)\bigr)_{(f,t)\in\F\times [0,1]} + A_n
\dconv \bigl( K(f,t)-t K(f,1) \bigr)_{(f,t) \in \F\times[0,1]}.
\end{align*}
Here we have applied the continuous mapping theorem in the final step.
\end{proof}

\begin{remark}
Note that, in the setting of \autoref{the:spectral-gap} and \autoref{the:dynamical-system}, the process $K$ is a Kiefer process (that is the covariance structure is given by \eqref{eq:cov-structure}).
\end{remark}

An application of the continuous mapping theorem with the supremum-functional to the above theorem yields the following proposition about the asymptotic distribution of the test statistic $T_n$ defined by
\[
T_n:= \max_{0 \leq k\leq n} \sup_{f\in\F} \frac{k}{n} \Bigl( 1-\frac{k}{n} \Bigr) \sqrt{n}
  \bigl| F_k(f)-F_{k+1,n}(f)  \bigr|.
\]
\begin{theorem}\label{pro:test-statistic}
If $(X_i)_{i\in\N}$ satisfies the sequential empirical CLT, then under the null hypothesis $\mathbf{H}_0$ we have the convergence 
\[
T_n \dconv
	\sup_{f\in \F,\ t\in[0,1]} |K(f,t)-t K(f,1)|.
\]
\end{theorem}

\begin{proof}
$R_n(f,\cdot)$ is obviously constant on the intervals $\bigl[k/n,(k+1)/n\bigr)$, $k=0,\ldots,n-1$ and further 
$ R_n\bigl(f,{k}/{n}\bigr) = {k}/{n} ( 1-{k}/{n} ) \sqrt{n} \bigl( F_k(f)-F_{k+1,n}(f)  \bigr)$
for $k=0,\ldots,n$. Thus $T_n=\sup_{f\in\F, t\in[0,1]} R_n(f,t)$ and we can apply the continuous mapping theorem with
\[
\ell^\infty(\F\times[0,1]) \lra \R,\quad \phi \mapsto \sup_{f\in\F,\ t\in[0,1]} |\phi(f,t)|. 
\]
\end{proof}


\section[Proof of \autoref*{the:sq-ep-clt_mm}]{Proof of \autoref{the:sq-ep-clt_mm}}\label{sec:proof_SECLT}

As proved in \citet{DehDur11}, multiple mixing processes satisfy the following $2p$-th moment bound.
\begin{assumption}[Moment bounds for $\C$-observables]	\label{asp:mbound}
There exist $p\in\N$, $s\ge 1$, and monotone increasing functions $\Phi_1,\ldots,\Phi_p:\R_+\lra\R_+$, 
 \begin{equation}\label{eq:2pbound}
\E\left[\left(\sum_{i=1}^{n}(f(X_i)-\mu f)\right)^{2p}\right]\leq
\sum_{i=1}^p n^i\|f\|_{s}^i \Phi_i (\|f\|_{\C}) \quad \text{for all}\ f\in\C \text{ with } \|f\|_\infty\le 1.
 \end{equation}
\end{assumption}

\medskip

We shall obtain \autoref{the:sq-ep-clt_mm} as a consequence of the more general following result.
\begin{theorem}\label{the:s-ep-clt}
Let $(X_i)_{i\in\No}$ be an $\X$-valued stationary process with marginal distribution $\mu$ and let $\F$ be a uniformly bounded class of measurable functions on $\X$.
Suppose that for some normed vector space $\C$ of measurable functions on $\X$, \autoref{asp:clt_seq} and \autoref{asp:mbound} hold. 
Moreover, assume that there exist a subset $\G$ of $\C$ which is bounded in $\|\cdot\|_\infty$-norm, a constant $r>-1$ and a monotone increasing function $\Psi:\R_+\lra\R_+$ such that 
			\begin{equation}
				\label{eq:bracket}
				\int_0^1 \varepsilon^{r}\sup_{\epsilon \le \delta \le 1} N^2\bigl(\delta,
				\Psi\left(\delta^{-1}\right),\F,\G,\L^s(\mu)\bigr)
				d\varepsilon <\infty.
			\end{equation}
If			
	\begin{align}
		\Phi_i(2\Psi(x)) = O(x^{\gamma_i}), \label{eq:Phi_of_Psi}
	\end{align}
for some non-negative constants $\gamma_i$ 
such that
	\begin{align}
		\gamma_i < 2p - (i + r + 2) \label{eq:gamma_i-p_seq},
	\end{align}
then the sequential empirical process $U_n$ converges in distribution in 
$\ell^\infty(\F\times[0,1])$ to a tight Gaussian process $K$.
\end{theorem}

\begin{proof}[Proof of \autoref{the:sq-ep-clt_mm}]
Under multiple mixing (\autoref{asp:mm}), \autoref{asp:mbound} holds for all $p\ge1$ and we can specify that $\Phi_i(x)=c \log^{2p+(d_0-1)i}(x+1)$ for some $c>0$ depending only on $p$, see \citet{DehDur11}.
Observe that, choosing $\Psi:= \exp(C\id^{{1}/{\gamma}})$ for some $C>0$ and $\gamma>1$, we have $\Phi_i(2 \Psi(x)) = O(x^{(2p+(d_0-1)i)/{\gamma}})$. 
Therefore, the conditions \eqref{eq:Phi_of_Psi} and \eqref{eq:gamma_i-p_seq} hold for sufficiently large $p\in\N$ as soon as $\gamma>d_0+1$. 
With this choice of $\Psi$, the condition \eqref{eq:bracket} is exactly the condition \eqref{con:entropy_exp}. Thus \autoref{the:sq-ep-clt_mm} is a consequence of \autoref{the:s-ep-clt}.
\end{proof}

\medskip

The proof of \autoref{the:s-ep-clt} extends the idea introduced in \citet*{DehDurTus14a}, taking into account the time parameter due to the sequential case.
The main idea is to introduce some approximation $U_n^{(q)}$ for the original process $U_n$, which is based on functions in $\G$ and thus can be controlled by \autoref{asp:clt_seq} and \ref{asp:mbound}. 
The approximation can be constructed as follows:
For all $q\ge 1$, there exist two sets of $N_q:=N(2^{-q},\Psi(2^q),\F,\G,\L^s(\mu))$ functions $\{g_{q,1},\dots,g_{q,N_q}\}\subset\G$ and $\{g_{q,1}',\dots,g_{q,N_q}'\}\subset\G$, 
such that
\begin{align}
	\|g_{q,i}-g_{q,i}'\|_s&\le2^{-q},& \|g_{q,i}\|_\C&\le \Psi(2^{q}),& \|g_{q,i}'\|_\C&\le \Psi(2^q) \label{eq:tmp150213.3}
\end{align} 
and for all $f\in\F$, there exists some $i$ such that $g_{q,i}\le f\le g_{q,i}'$.
Further, by \eqref{eq:bracket}, 
\begin{align}
	\sum_{q\ge 1}2^{-(r+1)q} N_q^2<\infty.\label{eq-bracket_sum}
\end{align}
To approximate the indexing function $f\in\F$, construct a partition of $\F$ into $N_q$ subsets $\F_{q,i}$ such that for each $f\in\F_{q,i}$ one has $g_{q,i}\leq f \leq g_{q,i}'$. 
We use the notation $\pi_q f=g_{q,i^*}$ and $\pi_q' f=g_{q,i^*}'$, where $i^*$ is the uniquely defined integer such that $f\in\F_{q,i^*}$.
To approximate the time parameter we use the partition of $[0,1]$ into subsets $\mathcal{T}_{q,j}$, $j=1\ldots,2^q$, given by $\mathcal{T}_{q,j}:=[(j-1) 2^{-q},j 2^{-q})$ for $j<2^q$ and $\mathcal{T}_{q,2^q}:=[1-2^{-q},1]$.
For $t\in[0,1]$ we define
$\tau_q t:= \max\{(j-1) 2^{-q} \leq t : {j=1,\ldots,2^q}\}$ 
and further $\tau_q' t:=\tau_qt + 2^{-q}$. 
We extend the notation introduced in \autoref{sec:def} to arbitrary $\mu$-integrable functions $f:\X\lra\R$ by setting
\[ 
F_n(f):= \frac{1}{n} \sum_{i=1}^{n} f(X_i) 
\]
and for $t\in[0,1]$
\[  
U_n(f,t)
:= \frac{[nt]}{\sqrt{n}} \bigl( F_{[nt]}(f)- \mu(f) \bigr)
= \frac{1}{\sqrt{n}} \sum_{i=1}^{[nt]} \bigl( f(X_i)-\mu(f) \bigr).
\]
For each $q\ge 1$, we introduce the approximating process
\[ 
U_n^{(q)}(f,t):=U_n(\pi_qf,\tau_qt)= \frac{1}{\sqrt{n}} \sum_{i=1}^{[n\tau_qt]} \left( \pi_q f(X_i)-\mu(\pi_qf) \right).
\]
Note that this process
is
constant on each $\F_{q,i}\times\mathcal{T}_{q,j}$.

The approximating processes $U_n^{(q)}$ will help us to establish the weak convergence of the process $U_n$.
Using Theorem 2.1 in \citet*{DehDurTus14a}, we see that it is sufficient to show that there exist processes $ U^{(q)}\in\ell^{\infty}(\F\times[0,1])$, $q\ge 1$, such that
\begin{align}
 &U_n^{(q)} \dconv U^{(q)}
		\quad \text{as}\ n\rightarrow \infty\ \text{for all}\ q\geq 1,
\label{vor:xm_conv}
\end{align}
and
\begin{align}
 &\limsup_{n \rightarrow \infty} \P^*\bigl( \| U_n - U_n^{(q)} \|_{\infty} \geq 
\delta\bigr) \longrightarrow 0
		\quad \text{as}\ q \rightarrow \infty\ \text{for all}\ \delta>0.
\label{vor:xn_near_xnm} 
\end{align}

We will establish the conditions \eqref{vor:xm_conv} and \eqref{vor:xn_near_xnm} in the two following propositions:

\begin{proposition}\label{pro:u_n^q-u^q}
If \autoref{asp:clt_seq} holds, then for all $q\in\N$ the process $(U_n^{(q)}(f,t))_{(f,t)\in\mathcal{F}\times[0,1]}$ converges in distribution to a piecewise constant Gaussian process $(U^{(q)}(f,t))_{(f,t)\in\mathcal{F}\times[0,1]}$ as $n\to\infty$. 
\end{proposition}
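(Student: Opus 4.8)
The plan is to exploit that, for each fixed $q$, the process $U_n^{(q)}$ takes only finitely many distinct values as $(f,t)$ ranges over $\F\times[0,1]$, so that its weak convergence reduces to the finite dimensional CLT of \autoref{asp:clt_seq}.

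First I would record the finite structure of $U_n^{(q)}$. By construction $\pi_q f\in\{g_{q,1},\dots,g_{q,N_q}\}\subset\G$ and $\tau_q t$ ranges over the finite grid $\{(j-1)2^{-q}:j=1,\dots,2^q\}$. Hence $U_n^{(q)}$ is constant on each cell $\F_{q,i}\times\mathcal{T}_{q,j}$ of the finite partition of $\F\times[0,1]$, where it equals $U_n(g_{q,i},(j-1)2^{-q})$. Collecting these values into the random vector
\[
  V_n:=\bigl(U_n(g_{q,i},(j-1)2^{-q})\bigr)_{1\le i\le N_q,\,1\le j\le 2^q}\in\R^{N_q\cdot 2^q},
\]
we may write $U_n^{(q)}=\Lambda(V_n)$, where $\Lambda:\R^{N_q\cdot 2^q}\lra\ell^\infty(\F\times[0,1])$ is the linear map sending $(a_{i,j})$ to $\sum_{i,j}a_{i,j}\,\ind_{\F_{q,i}\times\mathcal{T}_{q,j}}$. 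Since the cells partition $\F\times[0,1]$, one has $\|\Lambda(a)\|_\infty=\max_{i,j}|a_{i,j}|$, so $\Lambda$ is an isometric, in particular continuous, embedding onto a finite dimensional, hence separable, subspace of $\ell^\infty(\F\times[0,1])$. In particular $U_n^{(q)}$ is genuinely measurable and the usual outer-measure subtleties do not arise here.

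Next I would invoke the finite dimensional CLT. Each entry of $V_n$ is a partial-sum functional $\frac{1}{\sqrt n}\sum_{i=1}^{[nt_l]}(f_l(X_i)-\mu f_l)$ with $f_l=g_{q,i}\in\G$ and $t_l=(j-1)2^{-q}\in[0,1]$, so \autoref{asp:clt_seq} applies verbatim and yields $V_n\dconv V$ for a centred Gaussian vector $V\sim N(0,\Sigma)$.

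Finally I would conclude by the continuous mapping theorem: as $\Lambda$ is continuous and $V_n\dconv V$, it follows that $U_n^{(q)}=\Lambda(V_n)\dconv\Lambda(V)=:U^{(q)}$. Because $\Lambda$ is linear and $V$ is Gaussian, $U^{(q)}$ is a centred Gaussian process, and by construction it is constant on each cell $\F_{q,i}\times\mathcal{T}_{q,j}$, i.e.\ piecewise constant, as claimed. I do not expect a genuine obstacle here; the only point needing care is the bookkeeping that realises the $\ell^\infty$-valued limit as the image under a continuous map of a finite dimensional Gaussian limit, together with the observation that confining $U_n^{(q)}$ to a finite dimensional subspace removes the measurability difficulties that otherwise accompany convergence in $\ell^\infty(\F\times[0,1])$.
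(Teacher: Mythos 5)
Your proposal is correct and takes essentially the same route as the paper: both reduce the claim to the finite dimensional CLT of \autoref{asp:clt_seq}, applied to the functions $g_{q,i}\in\G$ and grid points $(j-1)2^{-q}$, combined with the observation that $U_n^{(q)}$ is constant on the finitely many cells $\F_{q,i}\times\mathcal{T}_{q,j}$ partitioning $\F\times[0,1]$. The paper asserts the passage from finite dimensional convergence to convergence of the whole process without elaboration; your isometric embedding $\Lambda$ and the continuous mapping theorem simply make that final step (and the attendant measurability and separability of $U_n^{(q)}$) explicit.
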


\begin{proposition}\label{pro:u_n^q-u_n} 
Assume that \autoref{asp:mbound} holds for some $p\in\N$, $s\geq 1$ and some monotone increasing functions $\Phi_1,\ldots,\Phi_p:\R_+\lra\R_+$.
Moreover, suppose there exists a constant $r>-1$ and a monotone increasing function $\Psi:\R_+\lra \R_+$ such that \eqref{eq:bracket} holds.
If \eqref{eq:Phi_of_Psi} holds for some non-negative constants $\gamma_i$ satisfying \eqref{eq:gamma_i-p_seq}, then for all $\varepsilon,\eta>0$ there exists some $q_0$ such that for all $q\geq q_0$
\[
 \limsup_{n\rightarrow\infty}\P^*\left(\sup_{t\in[0,1]} \sup_{f\in\F}\left|U_n(f,t)-U_n^{(q)}(f,t)\right|>\varepsilon\right)\le\eta.
\]
\end{proposition}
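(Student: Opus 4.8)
The plan is to expand the gap $U_n-U_n^{(q)}$ as a telescoping sum over the dyadic resolution levels $l\ge q$ and to estimate each increment directly through the moment bound \eqref{eq:2pbound}, thereby never taking a maximum over the continuous time parameter. Writing $V_n^{(l)}(f,t):=U_n(\pi_l f,\tau_l t)$, so that $V_n^{(q)}=U_n^{(q)}$, I would split each increment into a spatial and a temporal refinement,
\[
V_n^{(l+1)}(f,t)-V_n^{(l)}(f,t)=U_n\!\bigl(\pi_{l+1}f-\pi_l f,\tau_{l+1}t\bigr)+\Bigl(U_n(\pi_l f,\tau_{l+1}t)-U_n(\pi_l f,\tau_l t)\Bigr).
\]
The decisive structural observation is that both summands are single partial sums of a function lying in $\G\cup(\G-\G)$ over a time window whose right endpoint is a point of the dyadic grid: the first uses $\pi_{l+1}f-\pi_l f\in\G-\G$, for which \eqref{eq:tmp150213.3} gives $\|\pi_{l+1}f-\pi_l f\|_s\le 2^{-l+1}$ and $\|\pi_{l+1}f-\pi_l f\|_\C\le 2\Psi(2^{l+1})$; the second uses $\pi_l f\in\G$ over a window of length at most $\lceil n2^{-l}\rceil$. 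Consequently \autoref{asp:2pbound} applies to each term verbatim, and no maximal inequality over $t$ is required.

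For a fixed level $l$ I would bound the supremum over $(f,t)$ by a union bound over the finitely many configurations that occur: the $\pi_l f$ and $\pi_{l+1}f$ range over at most $N_lN_{l+1}\le N_{l+1}^2$ pairs of bracket functions, while the dyadic times $\tau_{l+1}t$ range over $2^{l+1}$ grid points. Combining Markov's inequality, \eqref{eq:2pbound}, the estimates $\Phi_i(2\Psi(2^{l}))=O(2^{l\gamma_i})$ from \eqref{eq:Phi_of_Psi}, and a budget $\varepsilon_l$ with $\sum_{l\ge q}\varepsilon_l\le\varepsilon$ (say $\varepsilon_l\propto(l-q+1)^{-2}$), the contribution of level $l$ is of order
\[
\varepsilon_l^{-2p}\,N_{l+1}^2\,2^{l}\sum_{i=1}^{p}n^{\,i-p}\,2^{\,l(\gamma_i-i)} .
\]
Here the factor $2^l$ is exactly the number of new dyadic time points, and it is this factor — absent in the non-sequential case — that turns the admissible exponent range into \eqref{eq:gamma_i-p_seq}. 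Inserting $N_{l+1}^2\asymp a_{l+1}2^{(r+1)l}$ with $(a_l)$ summable by \eqref{eq-bracket_sum} and summing over $q\le l< Q(n)$, where $Q(n)$ is chosen with $2^{-Q}\asymp n^{-1/2-\delta}$, the hypotheses \eqref{eq:gamma_i-p_seq} force every exponent of $n$ in the lower-degree terms ($i<p$) to be negative, so these vanish as $n\to\infty$, whereas for $i=p$ the power of $n$ disappears but the resulting geometric $l$-series converges and its tail beyond level $q$ tends to $0$ as $q\to\infty$.

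It remains to control the two remainders left after truncating at level $Q=Q(n)$. For the spatial remainder $U_n(f,t)-U_n(\pi_Q f,t)$ I would use $0\le f-\pi_Q f\le \pi_Q'f-\pi_Q f=:w_Q$ pointwise; since $w_Q\ge0$, the sums $\sum_{i\le[nt]}(f-\pi_Q f)(X_i)$ increase in $t$, whence the $t$-supremum of this remainder is dominated by $U_n(w_Q,1)+2\sqrt n\,\mu w_Q$, where the deterministic term is $\le 2\sqrt n\,2^{-Q}=O(n^{-\delta})$ by Jensen and $U_n(w_Q,1)$ is again a single partial sum governed by \eqref{eq:2pbound}. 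For the temporal remainder $U_n(\pi_Q f,t)-U_n(\pi_Q f,\tau_Q t)$ I would continue the dyadic time-refinement for $l\ge Q$ with the space coordinate frozen at $\pi_Q f$, so that the relevant $\|\cdot\|_\C$-norm stays equal to $\Psi(2^Q)$; since the increments vanish once $2^{-l}<1/n$, finitely many of them remain, each a single partial sum, and the same bookkeeping under \eqref{eq:gamma_i-p_seq} shows their total is negligible.

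The main obstacle is precisely the simultaneous calibration in the second and third paragraphs: the cutoff $Q(n)$ must be large enough to annihilate the deterministic bias $\sqrt n\,\mu w_Q$ yet small enough that the growing $\|\cdot\|_\C$-norms $\Psi(2^l)$ — entering through $\Phi_i$ — and the entropy factors $N_l^2$ do not overwhelm the $n^{\,i-p}$ gains, and the temporal remainder must be disposed of without reintroducing a genuine maximum over $t$. The first tension is resolved exactly by the exponent condition \eqref{eq:gamma_i-p_seq}, and the second by exploiting the monotonicity of the nonnegative bracket width together with the frozen-norm continuation of the time chaining.
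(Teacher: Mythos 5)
Your proposal is, in substance, the paper's own proof: the same dyadic chaining of $U_n-U_n^{(q)}$ into spatial increments (differences of bracket functions in $\G\cup(\G-\G)$ at a frozen dyadic time) and temporal increments (a frozen bracket function over a dyadic window), the same union bound over the finitely many function/time configurations at each level, Markov's inequality applied to the $2p$-th moments of \autoref{asp:2pbound}, a summable per-level error budget, and a truncation of the chain at resolution of order $\log_2\sqrt{n}$ with the leftover bias handled deterministically through the bracket width and the uniform boundedness of $\G$. Your monotonicity trick for the spatial remainder and the frozen-norm continuation in time replace the paper's deterministic bounds \eqref{eq:tmp-020412-1} and \eqref{eq:tmp-020412-2}, but they play the identical role.

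Two steps need repair as written, though neither is fatal. (i) Your cutoff $2^{-Q}\asymp n^{-1/2-\delta}$ is more aggressive than the paper's $2^{-(q+K_{n,q})}\asymp\epsilon n^{-1/2}$, and the claim that \eqref{eq:gamma_i-p_seq} then forces every exponent of $n$ in the $i<p$ terms to be negative is not true for an arbitrary fixed $\delta>0$: the relevant exponent is $-(p-i)+(\tfrac12+\delta)(\gamma_i-i+r+2)$, and \eqref{eq:gamma_i-p_seq} only gives $\gamma_i-i+r+2<2(p-i)$, so the exponent is bounded above by $2\delta(p-i)$, which is positive; you must first choose $\delta$ small in terms of the slack $2p-(i+r+2)-\gamma_i$, which is possible precisely because the inequality is strict, but it has to be said. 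The paper sidesteps this by truncating at exactly $\epsilon n^{-1/2}$ and accepting an $O(\epsilon)$, rather than $o(1)$, deterministic remainder, thereby using \eqref{eq:gamma_i-p_seq} with no extra room. (ii) The bound $N_lN_{l+1}\le N_{l+1}^2$ presupposes monotonicity of the bracketing numbers, which the paper pointedly does not assume (this is why \eqref{eq:bracket} carries the inner supremum); replace it by $N_lN_{l+1}\le\tfrac12(N_l^2+N_{l+1}^2)$, as the paper effectively does. Relatedly, your spatial remainder $U_n(w_Q,1)$ still varies over the up to $N_Q$ distinct bracket widths $w_Q=\pi_Q'f-\pi_Qf$, so a factor $N_Q\ll 2^{(r+1)Q/2}$ must be carried through that union bound as well; the power count still closes, again only because $r>-1$ and the inequalities in \eqref{eq:gamma_i-p_seq} are strict.
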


\begin{proof}[Proof of \autoref{the:s-ep-clt}]
By \autoref{pro:u_n^q-u^q} the convergence \eqref{vor:xm_conv} holds, while \eqref{vor:xn_near_xnm} is satisfied due to \autoref{pro:u_n^q-u_n}. Therefore, by Theorem 2.1 in \citet*{DehDurTus14a},  $U_n$ converges in distribution to
an $\ell^{\infty}(\F\times[0,1])$-valued, separable random variable $K$. Furthermore, we know that $U^{(q)}$ is a piecewise constant Gaussian process which converges in distribution to $K$. Thus $K$ is Gaussian, too. Since $\ell^\infty(\F\times[0,1])$ is complete, the tightness of $K$ follows from the separability (c.f. Lemma 1.3.2 in \citet{VanWel96}).
\end{proof}


\begin{proof}[Proof of \autoref{pro:u_n^q-u^q}]
Since by construction $\pi_q f\in\G$ for all $f\in\F$,
due to \autoref{asp:clt_seq}, 
the finite dimensional process $(U_n^{(q)}(f_1,t_1),\ldots,U_n^{(q)}(f_k,t_k)
)$ converges in distribution to some multi-dimensional normal distributed random variable $(U^{(q)}(f_1,t_1),\ldots,U^{(q)}(f_k,t_k))$ 
for all fixed $k\in\N$, $f_1,\ldots,f_k\in\F$, $t_1,\ldots,t_k\in[0,1]$.
All $U_n^{(q)}$, $n\in\N$, are constant on each $\F_{q,i}\times\mathcal{T}_{q,j}$, $i=1,\ldots,N^q$, $j=1,\ldots,2^q$. Therefore $U^{(q)}$ is constant on all $\F_{q,i}\times\mathcal{T}_{q,j}$, too. 
Since these sets form a partition of $\F\times[0,1]$, the finite dimensional convergence yields the convergence in distribution of the whole process $(U_n^{(q)}(f,t))_{(f,t)\in\mathcal{F}\times[0,1]}$.
\end{proof}


\begin{proof}[Proof of \autoref{pro:u_n^q-u_n}]
Let $\overline{Z}:=Z-\E Z$ denote the centring of a random variable $Z$ and
observe that for any random variables $Y_l \leq Y \leq Y_u$ the inequality 
\[ |\overline{Y}-\overline{Y_l}|\leq |\overline{Y_u}-\overline{Y_l}| + \E|{Y_u}-{Y_l}| \]
holds. 
Since for $f\in\F$, $k\in\No$ we have
$F_{[nt]}(\pi_{q+k} f,t)\le F_{[nt]}(f,t)\le F_{[nt]}(\pi_{q+k}'f, t)$, using that $\|\cdot\|_1\leq\|\cdot\|_s$ for $s\ge 1$ and applying \eqref{eq:tmp150213.3}, we obtain
\begin{align}
  &\bigl| U_n(f,t) - U_n(\pi_{q+k} f,t) \bigr| \notag\\
	\shoveright&\leq \bigl| U_n(\pi_{q+k}'f,t) - U_n(\pi_{q+k} f,t) \bigr| + \frac{[nt]}{\sqrt{n}}\, \E\bigl| F_{[nt]}(\pi_{q+k}' f- \pi_{q+k} f) \bigr| \notag\\
	\shoveright&\leq \bigl| U_n(\pi_{q+k}'f,t) - U_n(\pi_{q+k} f,t) \bigr| + \sqrt{n} 2^{-(q+k)}. \label{eq:tmp-020412-1}
\end{align}
Moreover, for all $n\geq2^{q+k}$ and $g\in\G$
\begin{align}
	\bigl| U_n(g,t)-U_n(g,\tau_{q+k}t) \bigr|
	&= \frac{1}{\sqrt{n}} \Biggl| \sum_{i=[n\tau_{{q+k}t}]+1}^{[nt]} g(X_i)- \mu(g) \Biggr| \notag\\
	&\leq 2M n^{-\frac{1}{2}} ([nt]-[n\tau_{q+k}t]) \notag\\
	&\leq 4M\sqrt{n} 2^{-(q+k)}, \label{eq:tmp-020412-2}
\end{align}
where $M := \sup \{ \|g\|_\infty : g\in\G \}$ is finite by assumption.
Analogously to the processes $U_n^{(q)}$, we introduce the processes $U_n'^{(q)}$ given by
\[
U_n'^{(q)}(f,t):=U_n(\pi_q'f,\tau_q't).
\]
An application of the triangle inequality, \eqref{eq:tmp-020412-1}, and \eqref{eq:tmp-020412-2} yields
\begin{align}
 \left| U_n(f,t) - U_n^{({q+k})}(f,t) \right|
 \leq \left| U_n'^{({q+k})}(f,t) - U_n^{({q+k})}(f,t) \right| + (4M+1) \sqrt{n}2^{-{q+k}}. \label{tmp-020412-4}
\end{align}
Combining \eqref{tmp-020412-4} with a telescopic sum argument, one obtains for any $K\ge 1$
\begin{align}
 &\left| U_n(f,t) - U_n^{(q)}(f,t) \right| 
\notag\\
 \shoveright&= \Biggl| \biggl\{ \sum_{k=1}^K U_n^{(q+k)}(f,t)-U_n^{(q+k-1)}(f,t) \biggr\} + U_n(f,t) - U_n^{(q+K)}(f,t)\Biggr| 
\notag\\
 \shoveright&\leq \biggl\{ \sum_{k=1}^K \left| U_n^{(q+k)}(f,t)-U_n^{(q+k-1)}(f,t)\right| \biggr\}+ \left| U_n'^{(q+K)}(f,t) - U_n^{(q+K)}(f,t)\right| 
\notag\\ 
\shoveright&\hspace{3ex}
+ (4M+1)\sqrt{n}2^{-(q+K)}. \label{eq:tmp050313-0} 
\end{align}
To assure $\epsilon/4 \leq (4M+1) \sqrt{n} 2^{-(q+K)} \leq \epsilon/2$, choose $K=K_{n,q}$, given by
\[
K_{n,q}:= \left[\log_2 \left( \frac{4(4M+1)\sqrt{n}}{2^{q}\epsilon} \right)\right].
\]
For each $i=1,\ldots,N_q$, $j=1,\ldots,2^q$, inequality \eqref{eq:tmp050313-0} implies
\begin{align*}
	\sup_{t\in\mathcal{T}_{q,j}} \sup_{f\in\F_{q,i}} |U_n(f,t) - U_n^{(q)}(f,t)| 
	\leq& \biggl\{ \sum_{k=1}^{K_{n,q}} \sup_{t\in\mathcal{T}_{q,j}} \sup_{f\in\F_{q,i}} \left| U_n^{(q+k)}(f,t)-U_n^{(q+k-1)}(f,t)\right|\biggr\} \notag\\ 
	& +\sup_{t\in\mathcal{T}_{q,j}} \sup_{f\in\F_{q,i}} \left| U_n'^{(q+K)}(f,t) - U_n^{(q+K)}(f,t)\right| 
	+ \frac{\epsilon}{2}.
\end{align*}
Set $\epsilon_k= \epsilon /(4k(k+1))$. 
Then $\sum_{i=1}^{\infty}\epsilon_k=\epsilon/4$ and for all $i=1,\ldots,N_q$ we have
\begin{align}
	&\P^*\biggl(\sup_{t\in\mathcal{T}_{q,j}} \sup_{f\in\F_{q,i}} |U_n(f,t) - U_n^{(q)}(f,t)| \geq \epsilon \biggr) \notag\\ 
	\shoveright&\leq \Biggl\{ \sum_{k=1}^{K_{n,q}} \P^*\biggl(  \sup_{t\in\mathcal{T}_{q,j}} \sup_{f\in\F_{q,i}} \left| U_n^{(q+k)}(f,t)-U_n^{(q+k-1)}(f,t)\right| \geq \epsilon_k\biggr) \Biggr\} \notag\\ 
	\shoveright& \hspace{3ex}+ \P^* \biggl( \sup_{t\in\mathcal{T}_{q,j}} \sup_{f\in\F_{q,i}} \left| U_n'^{(q+K)}(f,t) - U_n^{(q+K)}(f,t)\right| \geq \frac{\epsilon}{4}\biggr). \label{eq:tmp050313-1}
\end{align}
Recall that $(\pi_{q+k},\tau_{q+k})$ and thus $U_n^{(q+k)}$ and $U_n'^{(q+k)}$ are constant on each $\F_{q+k,i}\times\mathcal{T}_{q+k,j}$%
, $i=1,\ldots N_{q+k}$, $j=1,\ldots,2^{q+k}$,
and thus the suprema on the r.h.s.\ of inequality \eqref{eq:tmp050313-1} are in fact maxima over finite numbers of functions.
Therefore the outer probabilities may be replaced by usual probabilities here. 
Now, for each $k\in\N$, choose a set $\F(k)$ of at most $N_{k-1}N_k$ functions in $\F$, such that $\F(k)$ contains at least one function in each non empty $\F_{k,i} \cap \F_{k-1,i'}$, $i=1,\ldots,N_{k}$, $i'=1,\ldots,N_{k-1}$. 
For $q\in\N$ and $i\in\{1,\ldots,N_q\}$, define 
\begin{align*} 
F_{k,q,i}&:= \F_{q,i}\cap \F(q+k)\\
T_{k,q,j}&:=\bigl\{ (j-1) 2^{-q} + (m-1) 2^{-(q+k)} : m\in\{1,\ldots,2^{k}\} \bigr\}.
\end{align*}
Inequality \eqref{eq:tmp050313-1} implies
\begin{align}
	&\P^*\biggl(\sup_{t\in\mathcal{T}_{q,j}} \sup_{f\in\F_{q,i}} |U_n(f,t) - U_n^{(q)}(f,t)| \geq \epsilon \biggr) 
	\notag\\
	\shoveright&\leq \Biggl\{ 
	\sum_{k=1}^{K_{n,q}} \sum_{t\in T_{k,q,j}} \sum_{f\in F_{k,q,i}} \!\!\!
	\P\Bigl( \Bigl| U_n^{(q+k)}(f,t)-U_n^{(q+k-1)}(f,t)\Bigr| \geq \epsilon_k\Bigr)\Biggr\} 
	\notag\\ 
	\shoveright& \hspace{3ex}
	+ \sum_{t\in T_{K_{n,q},q,j}} \sum_{f\in F_{K_{n,q},q,i}} \!\!\!
	\P \Bigl( \Bigl| U_n'^{(q+K_{n,q})}(f,t) - U_n^{(q+K_{n,q})}(f,t)\Bigr| \geq \frac{\epsilon}{4}\Bigr) 
	\notag
	\\
%
%
	\shoveright&\leq \Biggl\{
	\sum_{k=1}^{K_{n,q}} \sum_{t\in T_{k,q,j}} \sum_{f\in F_{k,q,i}} \!\!\!
	\P\Bigl( \Bigl| U_n(\pi_{q+k}f,\tau_{q+k-1}t)-U_n(\pi_{q+k-1}f,\tau_{q+k-1}t)\Bigr| \geq \frac{\epsilon_k}{2}\Bigr) 
	\notag\\
	\shoveright&\hspace{6ex}
	+	\P\Bigl(  \Bigl| U_n(\pi_{q+k}f,\tau_{q+k}t)-U_n(\pi_{q+k}f,\tau_{q+k-1}t)\Bigr| \geq \frac{\epsilon_k}{2}\Bigr) 
	\Biggr\}
	\notag\\
	\shoveright&\hspace{3ex}
	+	\sum_{t\in T_{K_{n,q},q,j}} \sum_{f\in F_{K_{n,q},q,i}} \!\!\!
	\P \Bigl( \Bigl| U_n(\pi'_{q+K_{n,q}}f,\tau_{q+K_{n,q}}t) 
	- U_n(\pi_{q+K_{n,q}}f,\tau_{q+K_{n,q}}t)\Bigr| \geq \frac{\epsilon}{8}\Bigr) 
	\notag\\ 
	\shoveright&\hspace{6ex} 
 	+ \P \Bigl( \Bigl| U_n(\pi'_{q+K_{n,q}}f,\tau'_{q+K_{n,q}}t) - U_n(\pi'_{q+K_{n,q}}f,\tau_{q+K_{n,q}}t)\Bigr| \geq \frac{\epsilon}{8}\Bigr).
	\notag
\end{align}
Applying Markov's inequality on the $2p$-th moments, we obtain
\begin{align}
	&\P^*\biggl(\sup_{t\in\mathcal{T}_{q,j}} \sup_{f\in\F_{q,i}} |U_n(f,t) - U_n^{(q)}(f,t)| \geq \epsilon \biggr) 
	\notag\\
		\shoveright&\leq \Biggl\{
	\sum_{k=1}^{K_{n,q}} \sum_{t\in T_{k,q,j}} \sum_{f\in F_{k,q,i}} 
	{\Bigl(\frac{\epsilon_k}{2}\Bigr)}^{-2p} \Bigl( 
	\E \bigl| U_n(\pi_{q+k}f,\tau_{q+k-1}t)-U_n(\pi_{q+k-1}f,\tau_{q+k-1}t)\bigr|^{2p}
	\notag\\
	\shoveright&\hspace{6ex}
	+	\E\bigl| U_n(\pi_{q+k}f,\tau_{q+k}t)-U_n(\pi_{q+k}f,\tau_{q+k-1}t)\bigr|^{2p} 
			\Bigr)
		\Biggr\}
	\notag\\
	\shoveright&\hspace{3ex}
	+	\sum_{t\in T_{K_{n,q},q,j}} \sum_{f\in F_{K_{n,q},q,i}} \!\!
	{\Bigl(\frac{\epsilon}{8}\Bigr)}^{-2p} \Bigl( 
	\E \bigl| U_n(\pi'_{q+K_{n,q}}f,\tau_{q+K_{n,q}}t) 
	- U_n(\pi_{q+K_{n,q}}f,\tau_{q+K_{n,q}}t)\bigr|^{2p} 
	\notag\\ 
	\shoveright&\hspace{6ex} 
 	+ \E \bigl| U_n(\pi'_{q+K_{n,q}}f,\tau'_{q+K_{n,q}}t) - U_n(\pi'_{q+K_{n,q}}f,\tau_{q+K_{n,q}}t)\bigr|^{2p}
		\Bigr). 
	\label{eq:tmp060313-0}	
\end{align}

We will treat the expected values on the r.h.s.\ of inequality \eqref{eq:tmp060313-0} separately now by using \autoref{asp:mbound} and properties of our brackets used to cover $\F$. 
Recall that by \eqref{eq:tmp150213.3} we have
\begin{align}
 \|\pi_{q+k}f-\pi_{q+k-1}f\|_s &\le  \|\pi_{q+k}f-f\|_s+\|\pi_{q+k-1}f-f\|_s\le 3 \cdot 2^{-(q+k)} \label{eq:tmp150213.1}\\
 \|\pi_{q+k}f-\pi_{q+k}'f\|_s &\le 2^{-(q+k)} \notag\\
 \|\pi_{q+k}f-\pi_{q+k-1}f\|_\C &\le 2 \Psi(2^{q+k}) \label{eq:tmp150213.2}\\
 \| \pi_{q+k}f-\pi_{q+k}'f\|_\C &\le 2 \Psi(2^{q+k}).\notag
\end{align}
For convenience, throughout the rest of the proof will write $x \ll y$ if there is some finite constant $C\in(0,\infty)$ such that $x \leq C y$, where $C$ may only depend on global parameters of the corresponding statement.
Applying successively \eqref{eq:2pbound}, \eqref{eq:tmp150213.1}, \eqref{eq:tmp150213.2}, and \eqref{eq:Phi_of_Psi} we have
\begin{align}
	&\E \bigl| U_n(\pi_{q+k}f,\tau_{q+k-1}t)-U_n(\pi_{q+k-1}f,\tau_{q+k-1}t)\bigr|^{2p} 
	\notag\\
	\shoveright&\ll n^{-p} \sum_{\ell=1}^p n^\ell \| \pi_{q+k}f - \pi_{q+k-1}f \|_s^\ell \Phi_\ell (\| \pi_{q+k}f - \pi_{q+k-1}f\|_\C) 
	\notag\\ 
	\shoveright&\ll \sum_{\ell=1}^p n^{-(p-\ell)} 2^{(\gamma_\ell-\ell)(q+k)} \label{eq:tmp060313-1}
\end{align}
and analogously
\begin{align}
	\E \bigl| U_n(\pi'_{q+K_{n,q}}f,\tau_{q+K_{n,q}}t) - U_n(\pi_{q+K_{n,q}}f,\tau_{q+K_{n,q}}t)\bigr|^{2p}
	 \ll \sum_{\ell=1}^p n^{-(p-\ell)} 2^{(\gamma_\ell-\ell)(q+K_{n,q})}. \label{eq:tmp060313-2}
\end{align}
For fixed $g\in\G$ we have by stationarity
\begin{align}
	\E \bigl|U_n(g,\tau_{q+k}t) -U_n(g, \tau_{q+k-1}t)\bigr|^{2p}
	= n^{-p} \E \left[ \Biggl( \sum_{i=1}^{[n\tau_{q+k}t]-[n\tau_{q+k-1}t]} \bigl( g(X_i) - \mu g \bigr) \Biggr)^{2p} \right], \label{eq:tmp060313-3}
\end{align}
where we consider $\sum_{i=1}^0\ldots=0$. 
Note that by construction $\tau_{q+k}t-\tau_{q+k-1}t\in\{0,2^{-(q+k)}\}$ for every $t\in[0,1]$ and therefore
\[
{[n\tau_{q+k}t]-[n\tau_{q+k-1}t]} \leq n2^{-(q+k)}+1 \quad\text{for all}\ n\geq2^{q+k}.
\]
Applying \eqref{eq:2pbound}, \eqref{eq:tmp150213.3}, and \eqref{eq:Phi_of_Psi} to \eqref{eq:tmp060313-3} we obtain
\begin{align}
	\E \bigl|U_n(\pi_{q+k}f,\tau_{q+k}t) -U_n(\pi_{q+k}f, \tau_{q+k-1}t)\bigr|^{2p} 
	&\ll n^{-p} \sum_{\ell=1}^p \bigl(n2^{-(q+k)}\bigr)^\ell\|\pi_{q+k}f\|_{s}^\ell \Phi_\ell (\|\pi_{q+k}f\|_\C) \notag\\
	&\ll \sum_{\ell=1}^p n^{-(p-\ell)} 2^{(\gamma_\ell-\ell)(q+k)} \label{eq:tmp060313-4}
\end{align}
and analogously 
\begin{align}
	\E \bigl| U_n(\pi'_{q+K_{n,q}}f,\tau'_{q+K_{n,q}}t) - U_n(\pi'_{q+K_{n,q}}f,\tau_{q+K_{n,q}}t)\bigr|^{2p}
	&\ll \sum_{\ell=1}^p n^{-(p-\ell)} 2^{(\gamma_\ell-\ell)(q+K_{n,q})}. \label{eq:tmp060313-5}
\end{align}

Now, apply \eqref{eq:tmp060313-1}, \eqref{eq:tmp060313-2}, \eqref{eq:tmp060313-4}, and \eqref{eq:tmp060313-5} to \eqref{eq:tmp060313-0}. We infer
\begin{align}
		&\P^*\biggl(\sup_{t\in\mathcal{T}_{q,j}} \sup_{f\in\F_{q,i}} \Bigl|U_n(f,t) - U_n^{(q)}(f,t)\Bigr| \geq \epsilon \biggr)\notag\\
		\shoveright&\ll  \sum_{k=1}^{K_{n,q}} \#T_{k,q,j} \, \#F_{k,q,i} \, \frac{(k(k+1))^{2p}}{\epsilon^{2p}} \sum_{\ell=1}^p n^{-(p-\ell)} 2^{(\gamma_\ell-\ell)(q+k)}. \label{eq:tmp060313-6}
\end{align}
Recall that by construction of the partitions of $\F$ and $[0,1]$ at the beginning of this section, we have $\sum_{j=1}^{2^q} \# T_{k,q,j} = 2^{q+k}$ and $\sum_{i=1}^{N_q} \# F_{k,q,i} = \#\F(q+k) \leq N_{q+k-1} N_{q+k}$.
Therefore \eqref{eq:tmp060313-6} yields 
\begin{align}
	&\P^*\left(\sup_{t\in[0,1]} \sup_{f\in\F}\left|U_n(f,t)-U_n^{(q)}(f,t)\right|>\varepsilon\right) 
	\notag\\
	\shoveright&\ll \sum_{\ell=1}^p \sum_{k=1}^{K_{n,q}}
	\sum_{j=1}^{2^q} \#T_{k,q,j}
	\sum_{i=1}^{N_q} \#F_{k,q,i}
	k^{4p} 
	n^{-(p-\ell)} 2^{(\gamma_\ell-\ell)(q+k)}  
	\notag\\
	\shoveright&\ll \sum_{\ell=1}^p \sum_{k=1}^{K_{n,q}}
	N_{q+k-1} N_{q+k}
	k^{4p} n^{-(p-\ell)} 2^{(\gamma_\ell-\ell+1)(q+k)}. 
	\notag
\end{align}
This implies that for any $\eta>0$
\begin{align}
	&\P^*\left(\sup_{t\in[0,1]} \sup_{f\in\F}\left|U_n(f,t)-U_n^{(q)}(f,t)\right|>\varepsilon\right) 
	\notag\\
	\shoveright&\ll \sum_{\ell=1}^p 
	n^{-(p-\ell)} \max\left\{ 1 \;,\; 2^{(\gamma_\ell-\ell+r+2+\eta)(q+K_{n,q})} \right\}
	\sum_{k=1}^{K_{n,q}} N_{q+k-1} N_{q+k} k^{4p} 2^{-(r+1+\eta)(q+k)} \notag\\
	\shoveright&\ll \max\left\{ 1 \;,\; \max_{\ell=1,\ldots,p} n^{\frac{1}{2}(\gamma_\ell+\ell-2p+r+2+\eta)} \right\}
	\sum_{k=q+1}^{\infty} N_{k-1} N_{k} k^{4p} 2^{-(r+1+\eta)k}. 
	\label{eq:tmp030312.1}
\end{align}
By \eqref{eq:gamma_i-p_seq} we can choose $\eta$ small enough to assure $\gamma_\ell+\ell-2p+r+2+\eta<0$ for all $\ell=1,\ldots,p$. Thus the factor in front of the sum is uniformly bounded w.r.t.\ $n$. Using \eqref{eq-bracket_sum}, we obtain
\[
\sum^\infty_{k=1} N_{k-1} N_{k} k^{4p}2^{-(r+1+\eta)k} 
\leq \sum^\infty_{k=1} 2^{-(r+1)k} N_{k-1}^2 \cdot k^{4p}2^{-\eta k} + 
\sum^\infty_{k=1} 2^{-(r+1)k}N_{k}^2 \cdot k^{4p}2^{-\eta k} < \infty
\]
for sufficiently small $\eta>0$ which implies that the series in \eqref{eq:tmp030312.1} goes to zero as $q\to\infty$.
\end{proof}

\section[Proof of \autoref*{lem:covariance_seq}]{Proof of \autoref{lem:covariance_seq}}\label{sec:proof_covariance}

In order to simplify the expressions, set $\Psi(x)= \exp(C x^{{1}/{\gamma}})$, where $C$ and $\gamma$ are given by \autoref{the:sq-ep-clt_mm}. Choose $b\in(1,\gamma)$ and observe that,
       \begin{equation}\label{eq:sum_psi_theta}
	\sum_{k=1}^\infty \Psi(k^b) \theta^k < \infty.
	\end{equation}
For $f\in\F$, recall the definition of the approximating functions $\pi_q f$ from \autoref{sec:proof_SECLT} and note that, as a consequence of the entropy condition in \autoref{the:sq-ep-clt_mm}, we know that for every $q\in\N$, 
		\begin{align}
			\| f - \pi_q f\|_s &\leq 2^{-q} \label{eq:pms}\\
			\| \pi_q f \|_\C &\leq \Psi(2^q)\label{eq:pmB},
		\end{align}
where $s\ge 1$ is given in the assumptions of \autoref{the:sq-ep-clt_mm}.
Similarly, for all $g\in\F$ and $k\in\N$ there exist some $g_k\in\G$ satisfying
		\begin{align}
			\|g_k - g\|_s & \leq k^{-b} \label{eq:pks}\\
			\| g_k \|_\C &\leq \Psi(k^b). \label{eq:pkB}
		\end{align}
Let $U^{(q)}$ denote the limit process given in \autoref{pro:u_n^q-u^q}.
Condition \eqref{con:clt_cov} implies that for all $f,g\in\F$, $t,u\in[0,1]$ and $q\in\N$
 	\begin{align*} 
 		&\Cov\bigl(U^{(q)}(f,t) , U^{(q)}(g,u)\bigr)
		\\
 		\shoveright&= \min\{t,u\}\, \biggl\{ 
 		\sum_{k=0}^{\infty} \Cov\bigl( \pi_q f(X_0),  \pi_q g(X_k) \bigr) 
 		+ \sum_{k=1}^{\infty} \Cov\bigl( \pi_q g(X_0) , \pi_q f(X_k) \bigr) \biggr\}. 
 	\end{align*}
Since the auto-covariance functions of a converging Gaussian process converge to the auto-covariance functions of the limit process, the covariance structure of the limit process $K$ of $U^{(q)}$ is given by $\Cov(K(f,t),K(g,u)) =\lim_{q\to\infty} \Cov(U^{(q)}(f,t) , U^{(q)}(g,u))$. Thus it suffices to show that
\begin{align}
 	& \Bigl| \sum_{k=0}^{\infty} \Cov\bigl(\pi_q f(X_0) , \pi_q g(X_k)\bigr) - \Cov\bigl(f(X_0) , g(X_k)\bigr) \Bigr| \label{eq:tmp1501-1}\\
 	\shoveright&+ \Bigl| \sum_{k=1}^{\infty} \Cov\bigl(\pi_q g(X_0) , \pi_q f(X_k)\bigr) - \Cov\bigl(g(X_0) , f(X_k)\bigr) \Bigr| \notag
 	\lra 0\quad \text{as}\ q\to\infty.
\end{align} 	
By symmetry, both series can be treated the same way.
Let $k(q):= 2^{{q}/{b}}$.
We consider the series in line \eqref{eq:tmp1501-1}. We have
\begin{align}
	& \Bigl| \sum_{k=0}^{\infty} \Cov\bigl(\pi_q f(X_0) , \pi_q g(X_k)\bigr) - \Cov\bigr(f(X_0) , g(X_k)\bigr) \Bigr| \notag\\
	\shoveright&\leq 
	\sum_{k=0}^{k(q)} \bigl| \Cov\bigl(\pi_q f(X_0) -f(X_0) , \pi_q g(X_k)\bigr) \bigr|
	+ \sum_{k=0}^{k(q)} \bigl| \Cov\bigl( f(X_0) , \pi_q g(X_k) - g(X_k)\bigr) \bigr| \label{eq:1805121} \\
	\shoveright&\hspace{3ex}+ \sum_{k=k(q)+1}^{\infty} \bigl| \Cov\bigl(\pi_q f(X_0) -f(X_0) , \pi_q g(X_k)\bigr) \bigr| \label{eq:1805122}\\
	\shoveright&\hspace{3ex}+ \sum_{k=k(q)+1}^{\infty} \bigl| \Cov\bigl( f(X_0) , \pi_q g(X_k) - g(X_k)\bigr) \bigr|. \label{eq:1805123}
\end{align}
Let us treat the terms separately. 
Recall that both $\F$ and $\G$ are uniformly bounded in $\|\cdot\|_\infty$-norm. 
For the term in line \eqref{eq:1805121}, we know by H\"older's inequality, \eqref{eq:pms}, and the fact that $b>1$ that
\begin{align*}
	&\sum_{k=0}^{k(q)} \bigl| \Cov\bigl(\pi_q f(X_0) -f(X_0) , \pi_q g(X_k)\bigr) \bigr|
	+ \sum_{k=0}^{k(q)} \bigl| \Cov\bigl( f(X_0) , \pi_q g(X_k) - g(X_k)\bigr) \bigr| \\
	\shoveright&\ll \sum_{k=0}^{k(q)} \bigl( \| \pi_q f -f \|_s + \| \pi_q g -g \|_s \bigr) \\
	\shoveright&\ll  k(q) 2^{-q} 
	= 2^{-(1-\frac{1}{b})q} \lra 0\quad \text{as}\ q\to\infty,
\end{align*}
where again, we write $x\ll y$ if there is a constant $C\in(0,\infty)$ depending only on global parameters such that $x\leq Cy$. 
For the term in line \eqref{eq:1805122}, by \eqref{eq:cov_phi-f}, \eqref{eq:pms}, and \eqref{eq:pmB} we obtain
\begin{align*}
	&\sum_{k=k(q)+1}^{\infty} \bigl| \Cov\bigl(\pi_q f(X_0) -f(X_0) , \pi_q g(X_k)\bigr) \bigr| \\
	\shoveright&	\le D \| \pi_q f - f\|_\infty  \sum_{k=k(q)+1}^{\infty} \| \pi_q g\|_\C\, \theta^k \\
	\shoveright&	\ll \sum_{k=k(q)+1}^{\infty}\Psi(2^q) \theta^k \lra 0\quad \text{as}\ q\to\infty,
\end{align*}
where we used that $\Psi$ is increasing and condition \eqref{eq:sum_psi_theta} in the last step.
It only remains to show, that the term in line \eqref{eq:1805123} goes to zero as $q\to\infty$. We have
\begin{align}
	&\sum_{k=k(q)+1}^{\infty} \bigl| \Cov\bigl( f(X_0) , \pi_q g(X_k) - g(X_k)\bigr) \bigr|
	\notag\\
	\shoveright&	\leq \sum_{k=k(q)+1}^{\infty} \bigl| \Cov\bigl( f(X_0) , \pi_q g(X_k) - g_k(X_k)\bigr) \bigr| \label{eq:1805124}\\
	\shoveright&	\hspace{3ex} + \sum_{k=k(q)+1}^{\infty} \bigl| \Cov\bigl( f(X_0) , g_k(X_k) - g(X_k)\bigr) \bigr|. \label{eq:2205121}
\end{align}
First, consider the term in line \eqref{eq:1805124}. By \eqref{eq:cov_phi-f}, \eqref{eq:pmB}, and \eqref{eq:pkB} 
\begin{align*}
	&\sum_{k=k(q)+1}^{\infty} \bigl| \Cov\bigl( f(X_0) , \pi_q g(X_k) - g_k(X_k)\bigr) \bigr| 
	\notag\\
	\shoveright&	\ll \sum_{k=k(q)+1}^{\infty} \|f \|_\infty \, \|\pi_q g-g_k\|_\C \, \theta^k
	\notag\\
	\shoveright&	\ll  \, \biggl(  \sum_{k=k(q)+1}^{\infty} \| \pi_q g\|_\C\, \theta^k\biggr)
	\ +\ \biggl( \sum_{k=k(q)+1}^{\infty}  \| g_k \|_\C \, \theta^k \biggr) 
	\notag\\
	\shoveright&	\ll  \biggl( \sum_{k=k(q)+1}^{\infty} \Psi(2^q)\theta^k \biggr)
	\ +\ \biggl( \sum_{k=k(q)+1}^{\infty}  \Psi(k^b) \theta^k \Biggr)\lra 0\quad \text{as}\ q\to\infty,
\end{align*}
where we used that $\Psi$ is increasing and applied condition \eqref{eq:sum_psi_theta} in the last line.
To treat the term in line \eqref{eq:2205121}, we use H\"older's inequality and \eqref{eq:pks}. We obtain
\begin{align*}
	\sum_{k=k(q)+1}^{\infty} \bigl| \Cov\bigl( f(X_0) , g_k(X_k) - g(X_k)\bigr) \bigr| 
	& \ll \sum_{k=k(q)+1}^{\infty} \| g_k-g\|_s \\
	&\ll \sum_{k=k(q)+1}^{\infty} k^{-b}\lra 0 \quad \text{as}\ q\to\infty,
\end{align*}
since $b>1$ and thus $\sum_{k=1}^{\infty} k^{-b}<\infty$, which completes the proof.
\qed


\providecommand{\bysame}{\leavevmode\hbox to3em{\hrulefill}\thinspace}
\providecommand{\MR}{\relax\ifhmode\unskip\space\fi MR }
\providecommand{\MRhref}[2]{%
  \href{http://www.ams.org/mathscinet-getitem?mr=#1}{#2}
}
\providecommand{\href}[2]{#2}


\paragraph{Acknowledgement.}\
 The authors thank the referee for her/his very careful reading of an earlier version of this paper, and for many  thoughtful comments. The suggestions  made by the referee helped to improve the presentation of the paper. 


\end{document}